\newtheorem{lem}{Lemma}[section]
\newtheorem{thm}{Theorem}[section]
\newtheorem{definition}{Definition}[section]
\newtheorem{corollary}{Corollary}[section]
\newtheorem{example}{Example}[section]
\newtheorem{pro}{Proposition}[section]
\def\bc{\begin{center}}
\def\ec{\end{center}}
\begin{document}
 \abovedisplayskip=8pt plus 1pt minus 1pt
\belowdisplayskip=8pt plus 1pt minus 1pt
\thispagestyle{empty} \vspace*{-3.0truecm} \noindent
\parbox[t]{6truecm}{\footnotesize\baselineskip=11pt\noindent  {} 
 } \hfill
\vspace{1 true cm}

\bc{\large\bf Non-smooth analysis method in optimal investment- a BSDE approach
\footnote{The work of Helin Wu is  Supported by the Scientific and Technological Research
Program of Chongqing Municipal Education Commission  (KJ1400922). The work of Yong Ren is supported by the National
Natural Science Foundation of
 China (11201004 and 11371029).}}
 \ec
\vspace*{0.1 true cm}
\bc{\bf Helin Wu$^{+}$  \\
{\small\it School of Mathematics, Chongqing  University of Technology, Chongqing 400054, China\\
\small\it \quad  Corresponding author. E-mail: wuhelin@cqut.edu.cn}}\ec \vspace*{1 true mm}
\bc{\bf Yong Ren\\
{\small\it Department of Mathematics, Anhui Normal University, Wuhu 24100, China\\
\small\it \quad E-mail: renyong@126.com and brightry@hotmail.com}}\ec \vspace*{3 true mm}

\begin{abstract}
   In this paper, our aim is to investigate necessary conditions for optimal investment. We model the wealth process by Backward differential stochastic equations (shortly for BSDE) with or without constraints on wealth and portfolio process. The constraints can be very general thanks the  non-smooth analysis method we adopted.
\end{abstract}
{\bf Keywords:} Backward stochastic differential equation, Constraint, Non-smooth analysis, Optimal investment.

\section{Introduction}

In the sequel, let $(\Omega,{\cal F},P)$ be a probability space equipped with a
standard Brownian motion $W$. For a fixed real number $T>0$, we
consider the filtration $\mathbb{F}:=({\cal F}_t)_{0\leq t\leq T}$
which is generated by $W$ and augmented by all $P$-null sets. The
filtered probability space $(\Omega,{\cal F},\mathbb{F},P)$
satisfies the usual conditions.

Given an initial capital $x$, the  investor's investment process   is said to vary against  some kind of Backward  Differential Stochastic Equations (short for BSDE),
\begin{equation}\label{wealthprocess}
y_t=\xi+\int_t^Tg(s,y_s,z_s)ds-\int_t^Tz^*_sdW_s, 0\leq t \leq T
\end{equation}
with $y_0\leq x$,  where $g(\omega,t,y,z):\Omega\times[0,T]\times R \times R^d\rightarrow R$ is a function
satisfying  uniformly Lipschitz condition, i.e., there exists a positive constant $M$ such that
for all $(y_1,z_1),(y_2,z_2)\in  R \times R^d$
$$|g(\omega,t,y_1,z_1)-g(\omega,t,y_2,z_2)|\leq M(|y_1-y_2|+|z_1-z_2|)
\eqno(A1)$$ and
$$g(\cdot,0,0)\in H_T^2(R),  \eqno(A2)$$
where $H_T^2(R^d)$ denotes  the space of predictable process
$\varphi:\Omega\times[0,T]\rightarrow R^d$ satisfying
$\parallel \varphi\parallel^2=E\int_0^T|\varphi(s)|^2ds<+\infty.$
We  call $y_t$ wealth process and $z_t$ portfolio process.

The BSDE approach is a backward view  for investment. For comparison, if we take a forward view for the equation \eqref{wealthprocess},
then the complicated  process $z_t$ acts as a control. However, by the theory of BSDE,   $z_t$ is determined by the terminal value $\xi$ via a one-one correspondence. Thus the BSDE approach has
the virtue to handle similar control problem taking $\xi$ as a control instead. Moreover, if we consider the function $y_0:=\mathcal{E}_{t,T}^g(\xi)$ induced by
BSDE with terminal value $\xi$, a terminal perturbation method,  which was  first used  in  Bielecki et al.
\cite{tr-bi&h-ji&sr-pl&xy-zh}, can be used to analyze the optimal investment problem. Along with this line, later in many years, Ji and Peng
\cite{sl-ji&sg-pe} used it to obtain a necessary  condition via Ekeland variation principle. In this paper, as a generalization, we study  optimal investment problems by  non-smooth analysis method, which makes more general optimal problems inside our consideration.

Supposing that the investor has initial wealth $x$, he invest it in the financial market according to the equation \eqref{wealthprocess}.
 By the above analysis, his investment strategy is determined by  all available terminal value  for him.
Let
$$\mathcal{A}(x):=\{\xi\in L_T^2(R)|y_0\leq x\},$$
then our problem is
\begin{equation}\label{minmizerisk}
\min_{\xi\in \mathcal{A}(x)}\rho(\xi),
\end{equation}
where $\rho(\cdot)$  is a function defined on $L_T^2(R)$ which  usually represents a risk measure. However in our paper, it can
be a general Lipschitz function.

Sometimes, one ask $(y_t,z_t)$ satisfy some constraint condition 
$$(y(t),z(t))\in \Gamma_t,\quad a.e.,a.s. \quad \text{on} \quad [0,T]\times \Omega, \eqno(C)$$
where $\Gamma_t:=\{(y,z)|\phi(t,y,z)=0\}\subset R\times R^d$ and  $\phi$ satisfies  conditions (A1) and (A2).
In such constrained case, the investment  model should be changed to a Constrained Backward Differential Equation (shortly for CBSDE) as follows,
\begin{equation}\label{cwealthprocess}
y_t=\xi+\int_t^Tg(s,y_s,z_s)ds+C_T-C_t-\int_t^Tz^*_sdW_s, 0\leq t \leq T,
\end{equation}
 where $C_t$ is an increasing RCLL
(right continuous an left limit exists) process with $C_0= 0$,
$y_t$ is often called a super-solution of BSDE in the literature.

The idea of constrained investment comes from  incompleteness  or other constraints on investment in financial market.
In such case, super-hedging strategies are often adopted. Corresponding to such strategies, the minimal super-solution defined as follow is meaningful.
\begin{definition}\rm ($g_\Gamma$-solution) A $g$-super-solution $(y_t, z_t, C_t)$ is said to be
 the minimal solution, given $y_T=\xi$,
subjected to the constraint $(C)$ if for any other
g-super-solution $(y'_t, z'_t, C'_t)$ satisfying $(C)$  with
$y'_T=\xi$, we have $y_t\leq y'_t $ a.e., a.s.. We call the  minimal
solution  $g_\Gamma$-solution and denote it as
$y_t:=\mathcal{E}_{t,T}^{g,\phi}(\xi)$. In no constrained case, i.e. when $\phi(t,y,z)\equiv0\,\, $$P$-a.s. for any $t\in [0,T]$, we denote it as $\mathcal{E}_{t,T}^g(\xi)$
 for convenience.
\end{definition}

Our problem in the constrained case is similar  to \eqref{minmizerisk} but change $\mathcal{A}(x):=\{\xi\in L_T^2(R)|y_0\leq x\}$ to $\mathcal{A}^\phi(x):=\{\xi\in L_T^2(R)|\mathcal{E}_{t,T}^{g,\phi}(\xi)\leq x\}$, that is to minimize
\begin{equation}\label{cminmizerisk}
\min_{\xi\in \mathcal{A}^\phi(x)}\rho(\xi).
\end{equation}

Our paper is organized as follows. In section 2, we first study optimal investment problem without constraints  on wealth and portfolio process. With the help of non-smooth analysis, we obtain a necessary condition for an optimal solution, which generalize those obtained in Ji and Peng
\cite{sl-ji&sg-pe}. Secondly, we continue to consider constrained case. We point out serious difficulties  we met in this case and discussed such problems briefly, more details and fully discussion about such constrained  problem will be included in our future papers. In sections 3, we give some examples to verify our analysis. At last section, some necessary backgrounds about non-smooth analysis are gathered.

\section{Maximum principle for the optimal investment problem}
In this section, we aim to derive some necessary conditions for the optimality of our problem. Suppose the wealth process of an investor evolving according to
\eqref{wealthprocess} with  limited initial  capital $x$. The optimal problem \eqref{minmizerisk} is a constrained problem.
Just as usual,

Suppose no constraints on wealth and portfolio process,
by an exact penalization method used in non-smooth analysis, we need to assume  that\\
i) \quad The risk measure $\rho(\cdot)$ is Lipschitz\\
ii) \quad If we write $y_0\triangleq \mathcal{E}_{0,T}^g(\cdot)$ as a function of terminal value, it is Lipschitz.\\
If no constraints on wealth and portfolio process, by the theory of BSDE, $\mathcal{E}_{0,T}^g(\cdot)$ is obviously Lipschitz, see E. Pardoux, S.G. Peng \cite{e-pa&sg-pe} for example.
\begin{pro}\label{bsdelip}
Suppose $g$ satisfies conditions (A1) and (A2), $\xi_i\in L^2_T(R)$, $(y^i_t,z^i_t),\,i=1,2$ are solutions of  \eqref{wealthprocess} with terminal values $\xi_i$, then there exists a constant $C>0$ such that
$$|y^2_0-y^1_0|^2\leq CE|\xi_2-\xi_1|^2$$.
\end{pro}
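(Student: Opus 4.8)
The plan is to use the standard BSDE a priori estimate technique: write down the equation satisfied by the difference $(\delta y_t, \delta z_t) := (y^2_t - y^1_t, z^2_t - z^1_t)$, apply It\^o's formula to $|\delta y_t|^2$, and exploit the Lipschitz assumption (A1) on $g$ together with the Burkholder--Davis--Gundy inequality and Gronwall's lemma. Since we only want the estimate at time $0$, we do not even need the pathwise supremum; a plain expectation bound suffices.

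First I would subtract the two copies of \eqref{wealthprocess} to get
$$\delta y_t = (\xi_2-\xi_1) + \int_t^T \bigl(g(s,y^2_s,z^2_s) - g(s,y^1_s,z^1_s)\bigr)\,ds - \int_t^T \delta z^*_s\,dW_s.$$
Applying It\^o's formula to $|\delta y_t|^2$ between $0$ and $T$ and taking expectations kills the stochastic integral (it is a true martingale by the square-integrability of $z^i$), leaving
$$E|\delta y_0|^2 + E\int_0^T |\delta z_s|^2\,ds = E|\xi_2-\xi_1|^2 + 2E\int_0^T \delta y_s\bigl(g(s,y^2_s,z^2_s)-g(s,y^1_s,z^1_s)\bigr)\,ds.$$
Next I would bound the cross term using (A1) and the elementary inequality $2ab \leq \beta a^2 + \beta^{-1} b^2$: the $M|\delta y_s||\delta z_s|$ piece is absorbed so that half of the $E\int_0^T|\delta z_s|^2\,ds$ term can be moved to the left-hand side, while the $M|\delta y_s|^2$ piece and the leftover $|\delta y_s|^2$ contributions are collected into a constant (depending on $M$) times $E\int_0^T |\delta y_s|^2\,ds$. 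Since the same computation run from $t$ to $T$ (rather than $0$ to $T$) gives $E|\delta y_t|^2 \leq E|\xi_2-\xi_1|^2 + C\int_t^T E|\delta y_s|^2\,ds$, Gronwall's inequality yields $E|\delta y_t|^2 \leq e^{CT}E|\xi_2-\xi_1|^2$ for all $t$, in particular at $t=0$; but $\delta y_0$ is deterministic, so $|y^2_0-y^1_0|^2 = E|\delta y_0|^2 \leq e^{CT} E|\xi_2-\xi_1|^2$, which is the claim with $C := e^{CT}$ (relabeling the constant).

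There is no real obstacle here — this is the classical stability estimate for Lipschitz BSDEs, and indeed the proposition is essentially a restatement of the well-posedness results of Pardoux--Peng \cite{e-pa&sg-pe} cited just above. The only points requiring a word of care are the justification that $\int_0^\cdot \delta z^*_s\,dW_s$ is a genuine martingale (so that its expectation vanishes) — which follows from $z^i \in H_T^2(R^d)$, itself guaranteed by (A2) and the BSDE existence theorem — and the choice of the splitting parameter $\beta$ in Young's inequality so that the $|\delta z|^2$ term is only partially absorbed, leaving a strictly positive coefficient on the left. I would simply state that $C$ depends only on $M$ and $T$.
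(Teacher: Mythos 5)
Your argument is correct and is exactly the classical continuous--dependence estimate for Lipschitz BSDEs (It\^o's formula on $|\delta y_t|^2$, Young's inequality to absorb the $|\delta z|^2$ term, backward Gronwall), which is precisely what the paper invokes: the paper gives no proof of Proposition \ref{bsdelip} beyond citing Pardoux--Peng, and your write-up simply supplies the standard argument behind that citation. The only minor point to tidy is the justification that the stochastic integral $\int \delta y_s\,\delta z_s^*\,dW_s$ has zero expectation, which is routinely handled by a localization argument rather than claiming it is a true martingale outright.
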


In order to use results  \eqref{lemcalofnor} in appendix, we need to show
\begin{lem}\label{0notbelong}
Supposing that $g$ satisfies conditions {\rm(A1)} and {\rm(A2)}, then for any $\xi^*\in L_T^2(R)$, we have $0\notin \partial^o \mathcal{E}_{0,T}^g(\xi^*)$.
\end{lem}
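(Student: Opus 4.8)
The plan is to show that the generalized (Clarke) subdifferential $\partial^o \mathcal{E}_{0,T}^g(\xi^*)$ never contains $0$ by exhibiting a single direction along which the directional derivative of $\mathcal{E}_{0,T}^g$ is strictly positive, uniformly near $\xi^*$. The natural candidate is the constant perturbation $\eta \equiv 1$: intuitively, adding a deterministic positive amount to the terminal wealth should strictly increase the initial wealth $y_0$. More precisely, I would use the characterization $\mathcal{E}_{0,T}^{g,o}(\xi^*;\eta) = \max\{\langle \zeta,\eta\rangle : \zeta \in \partial^o \mathcal{E}_{0,T}^g(\xi^*)\}$ from the appendix (the material referenced as \eqref{lemcalofnor}), so that $0 \notin \partial^o \mathcal{E}_{0,T}^g(\xi^*)$ would follow once I prove that the Clarke directional derivative $\mathcal{E}_{0,T}^{g,o}(\xi^*;1)$ is bounded below by a strictly positive constant (and, by a symmetric argument with $-1$, one also controls the other side, but positivity in one direction already suffices to exclude $0$).

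First I would recall, from the theory of linear BSDEs (E. Pardoux, S.G. Peng \cite{e-pa&sg-pe}), the comparison and strict-comparison results: if $\xi_2 \geq \xi_1$ with $\xi_2 > \xi_1$ on a set of positive measure, then $\mathcal{E}_{0,T}^g(\xi_2) > \mathcal{E}_{0,T}^g(\xi_1)$. Next I would quantify this. For $\xi' = \xi^* + \lambda$ with $\lambda > 0$ a constant, the difference $\delta y_t := \mathcal{E}_{t,T}^g(\xi^* + \lambda) - \mathcal{E}_{t,T}^g(\xi^*)$ together with the corresponding $\delta z_t$ solves a linear BSDE
\begin{equation*}
\delta y_t = \lambda + \int_t^T \bigl(a_s \delta y_s + b_s^* \delta z_s\bigr)\,ds - \int_t^T \delta z_s^*\,dW_s,
\end{equation*}
where $a_s, b_s$ are bounded (by the Lipschitz constant $M$ from (A1)) progressively measurable processes arising from the mean-value/linearization of $g$. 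Solving this linear BSDE via the adjoint exponential-martingale weight $\Gamma_{0,T}$ gives $\delta y_0 = E[\Gamma_{0,T}\,\lambda] = \lambda\, E[\Gamma_{0,T}]$, and since $\Gamma$ is a strictly positive process with $\Gamma_0 = 1$ solving a linear SDE with bounded coefficients, a standard estimate yields $E[\Gamma_{0,T}] \geq e^{-MT} > 0$. Hence $\mathcal{E}_{0,T}^g(\xi^* + \lambda) - \mathcal{E}_{0,T}^g(\xi^*) \geq \lambda e^{-MT}$. The same bound holds with $\xi^*$ replaced by any $\xi$ near $\xi^*$, uniformly, because the constant $e^{-MT}$ depends only on $M$ and $T$.

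From here the Clarke directional derivative satisfies
\begin{equation*}
\mathcal{E}_{0,T}^{g,o}(\xi^*;1) = \limsup_{\xi \to \xi^*,\ t \downarrow 0} \frac{\mathcal{E}_{0,T}^g(\xi + t) - \mathcal{E}_{0,T}^g(\xi)}{t} \geq e^{-MT} > 0,
\end{equation*}
so every $\zeta \in \partial^o \mathcal{E}_{0,T}^g(\xi^*)$ has $\langle \zeta, 1 \rangle \geq e^{-MT} > 0$ (here $\langle \cdot,\cdot\rangle$ is the $L_T^2$ inner product, under which the constant $1$ is a genuine element), which forces $\zeta \neq 0$; therefore $0 \notin \partial^o \mathcal{E}_{0,T}^g(\xi^*)$. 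The main obstacle I anticipate is not the BSDE estimate itself, which is classical, but making sure the lower bound is genuinely \emph{uniform} over a neighborhood of $\xi^*$ so that it survives the $\limsup$ defining the Clarke derivative — this is why I would linearize carefully and keep all constants depending only on $M$ and $T$, never on $\xi^*$. A secondary point to handle cleanly is the identification of $\partial^o$ with the support function of the Clarke directional derivative in the infinite-dimensional space $L_T^2(R)$; I would invoke the appendix result for this rather than reprove it.
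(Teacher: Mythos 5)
Your BSDE estimate is correct and essentially the right ingredient: linearizing the difference of the two solutions and writing $\delta y_0=\lambda E[\Gamma_{0,T}]\geq \lambda e^{-MT}$, uniformly in $\xi$ and $t$, is a sound quantitative form of strict comparison. The problem is the final logical step. The support-function identity you invoke reads $f^o(\xi^*;1)=\max\{\langle\zeta,1\rangle:\zeta\in\partial^o f(\xi^*)\}$, so a lower bound $f^o(\xi^*;1)\geq e^{-MT}$ only says that \emph{some} $\zeta\in\partial^o f(\xi^*)$ has $\langle\zeta,1\rangle\geq e^{-MT}$; it does not say that \emph{every} subgradient does, and hence does not exclude $0$. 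Indeed, your announced principle that ``positivity in one direction already suffices to exclude $0$'' is false in general: for $f(x)=|x|$ on $R$ one has $f^o(0;1)=1>0$ while $\partial^o f(0)=[-1,1]\ni 0$. Since $0\in\partial^o f(\xi^*)$ is equivalent to $f^o(\xi^*;v)\geq 0$ for all $v$, what is needed is a direction in which the Clarke derivative is \emph{strictly negative}, i.e.\ a uniform \emph{upper} bound on the difference quotients, not a lower bound.

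Fortunately your own uniform estimate supplies this after a one-line reorientation: applying it with $\xi-t$ in place of $\xi$ and $\lambda=t$ gives $\mathcal{E}_{0,T}^g(\xi)-\mathcal{E}_{0,T}^g(\xi-t)\geq t\,e^{-MT}$ for every $\xi$ and $t>0$, hence $\bigl(\mathcal{E}_{0,T}^g(\xi-t)-\mathcal{E}_{0,T}^g(\xi)\bigr)/t\leq -e^{-MT}$, so $f^o(\xi^*;-1)\leq -e^{-MT}<0$, contradicting $0\in\partial^o \mathcal{E}_{0,T}^g(\xi^*)$. This repaired argument is exactly the paper's proof, which works with the direction $\eta=-1$ and the upper bound $\bigl(\mathcal{E}_{0,T}^g(\xi+t\eta)-\mathcal{E}_{0,T}^g(\xi)\bigr)/t\leq M E_Q[\eta]\leq -M$ obtained from the linearization under an equivalent measure $Q$. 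So: keep your estimate, but replace the conclusion drawn from $f^o(\xi^*;1)>0$ by the negative bound on $f^o(\xi^*;-1)$.
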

The proof is very similar to the proof of the strict comparison theorem of BSDE.
\begin{proof}
Suppose on the contrary $0\in \partial^o \mathcal{E}_{0,T}^g(\xi^*)$. Let $f(\cdot)=\mathcal{E}_{0,T}^g(\cdot)$, then
$$f^o(\xi^*, \eta)\geq 0$$ holds
for any $\eta\in L_T^2(R)$.

But by  the definition
$$f^o(\xi^*;\eta):=\limsup_{\xi\rightarrow \xi^*,t\downarrow 0}\frac{\mathcal{E}_{0,T}^g(\xi+t\eta)-\mathcal{E}_{0,T}^g(\xi)}{t}$$

$$\frac{\mathcal{E}_{0,T}^g(\xi+t\eta)-\mathcal{E}_{0,T}^g(\xi)}{t}\leq ME_Q[\eta],$$
when $P(\eta\leq 0)= 1, P(\eta<0)>0$,
where $Q$ is an equivalent measure of $P$, $M$ is a positive number.

In the above equation, set $\eta= -1$, then for any $\xi$,
$$\frac{\mathcal{E}_{0,T}^g(\xi+t\eta)-\mathcal{E}_{0,T}^g(\xi)}{t}\leq -M,$$
we get a contradiction with $f^o(\xi^*, \eta)\geq 0$.
\end{proof}

Since   functions   $\mathcal{E}_{0,T}^g(\cdot)\triangleq y_0$ generated by BSDE via  \eqref{wealthprocess}  is Lipschitz, then according to
in appendix,  $\partial^o \mathcal{E}_{0,T}^g(\xi)$ is not empty for any $\xi$, in fact, we have following results.

\begin{thm}
Suppose that $g$ and $ \xi$ are the standard parameters for BSDE, $(y_t,z_t)$ is a solution of BSDE with terminal value $\xi$. Let $f(\xi)=y_0$, then $f(\cdot)$ is Lipschitz
on Hilbert space $L_T^2(R)$, and
\begin{equation}
\partial^o f(\xi)\subset \int_0^T \langle\partial^o g(t,y_t,z_t), (\tilde{y}_t, \tilde{z}_t)\rangle dt-\int_0^T\tilde{z}^*_t dW_t.
\end{equation}
\end{thm}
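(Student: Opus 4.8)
The plan is to establish the theorem in two parts: the Lipschitz continuity of $f$, and then the inclusion for the generalized (Clarke) gradient $\partial^o f(\xi)$.

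First I would dispose of the Lipschitz claim, which is essentially Proposition~\ref{bsdelip} combined with the $L^2$-estimates from Pardoux--Peng \cite{e-pa&sg-pe}: given two terminal values $\xi_1,\xi_2$, the standard a priori estimate yields $|y^2_0-y^1_0|^2 \le C\, E|\xi_2-\xi_1|^2$, so $f$ is Lipschitz on $L_T^2(R)$ with constant $\sqrt{C}$; in particular $\partial^o f(\xi)$ is nonempty, convex, and weak-$*$ compact by the general theory recalled in the appendix. The substance of the theorem is the inclusion. The natural approach is to linearize the BSDE along the optimal trajectory. Fix $\xi$ with solution $(y_t,z_t)$, and for a perturbation direction $\eta\in L_T^2(R)$ and small $t>0$ let $(y^{t}_s,z^{t}_s)$ solve \eqref{wealthprocess} with terminal value $\xi+t\eta$. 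Writing the difference quotients $\tilde y^t_s = (y^t_s-y_s)/t$, $\tilde z^t_s=(z^t_s-z_s)/t$, these satisfy a BSDE with terminal value $\eta$ and generator given by the corresponding difference quotient of $g$. Since $g$ is Lipschitz, the difference quotient of $g$ can be written (Hadamard-type / mean value inclusion) as an $L^\infty$-bounded pair $(a^t_s,b^t_s)$ times $(\tilde y^t_s,\tilde z^t_s)$, where $(a^t_s,b^t_s)$ lies a.e. in (a set related to) the generalized gradient of $g$ at points between $(y_s,z_s)$ and $(y^t_s,z^t_s)$. One then passes to the limit as $t\downarrow 0$ and simultaneously $\xi'\to\xi$: by the $L^2$-stability of BSDEs, $(\tilde y^t,\tilde z^t)\to(\tilde y,\tilde z)$ where $(\tilde y,\tilde z)$ solves the limiting linear BSDE, and the limit generator coefficients $(a_s,b_s)$ can be taken to lie in $\partial^o g(s,y_s,z_s)$ by upper semicontinuity and closedness of the Clarke subdifferential. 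This produces the representation $f^o(\xi;\eta) \le \int_0^T \langle \partial^o g(t,y_t,z_t),(\tilde y_t,\tilde z_t)\rangle\,dt - \int_0^T \tilde z^*_t\,dW_t$ in the sense of the stated inclusion, which via the duality $\partial^o f(\xi)=\{\zeta : \langle \zeta,\eta\rangle \le f^o(\xi;\eta)\ \forall\eta\}$ gives the claimed inclusion for $\partial^o f(\xi)$.

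The key steps in order: (1) invoke Proposition~\ref{bsdelip} and the appendix facts to get Lipschitzness and well-definedness of $\partial^o f$; (2) set up the variational BSDE for the difference quotients $(\tilde y^t,\tilde z^t)$ with terminal value $\eta$; (3) express the difference quotient of $g$ as a bounded linear coefficient pair lying near $\partial^o g$ along the path, using the Lipschitz/mean-value structure; (4) pass to the limit $t\downarrow 0$, $\xi'\to\xi$ using BSDE stability estimates, identifying the limit coefficients as selections of $\partial^o g(s,y_s,z_s)$; (5) translate the resulting bound on $f^o(\xi;\eta)$ into the subdifferential inclusion by the support-function characterization of $\partial^o f$.

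The hard part will be step (4) together with making step (3) rigorous: a Lipschitz $g$ need not be differentiable, so one cannot literally use a chain rule, and the difference quotients of $g$ along the perturbed trajectories must be controlled uniformly and shown to converge — in an appropriate weak sense — to measurable selections of the set-valued map $s\mapsto \partial^o g(s,y_s,z_s)$. This requires a measurable selection argument and the upper semicontinuity (with closed convex values) of the Clarke subdifferential, plus care that the $\limsup$ in the definition of $f^o$ (taken over both $\xi'\to\xi$ and $t\downarrow 0$) is respected — one cannot simply fix $\xi'=\xi$. A secondary technical point is giving precise meaning to the bracket $\langle \partial^o g(t,y_t,z_t),(\tilde y_t,\tilde z_t)\rangle$ and to the set-valued integral, and checking that the exceptional $P$-null and $dt$-null sets where $g$ fails to be differentiable do not affect the integral; these are the places where the argument must be handled with the most care.
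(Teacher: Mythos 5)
There is a genuine gap, and it sits exactly where your sketch becomes vague: step (5). Your steps (2)--(4) (difference-quotient BSDE, Lebourg-type mean-value inclusion for the generator, passage to the limit using BSDE stability and upper semicontinuity of $\partial^o g$) are a reasonable --- indeed more detailed than the paper's --- way to obtain the directional bound $f^o(\xi;\eta)\leq \hat f(\eta)$, where $\hat f(\eta)$ denotes the value at time $0$ of the BSDE with terminal value $\eta$ and sublinear generator $\hat g(t,\cdot,\cdot)=g^o(t,y_t,z_t;\cdot,\cdot)$; the paper merely asserts this identification ``by continuous dependence and comparison.'' But the coefficients $(\varphi_t,\psi_t)$ you extract in step (4) depend on the direction $\eta$ (and on subsequences), so what you prove is only a bound on $f^o(\xi;\eta)$ for each fixed $\eta$. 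The theorem's stated meaning is stronger: every element $\eta\in\partial^o f(\xi)$ must be represented by a \emph{single} measurable selection $(\varphi_t,\psi_t)\in\partial^o g(t,y_t,z_t)$ that works simultaneously for all test directions $\zeta$, i.e.\ $\eta\in M$ where $M$ is the set of terminal functionals of linear BSDEs driven by such selections. The ``duality'' you invoke, $\partial^o f(\xi)=\{\zeta:\langle\zeta,\eta\rangle\leq f^o(\xi;\eta)\ \forall\eta\}$, is just the definition of the Clarke subdifferential and does not produce such a selection.

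What is missing is the convex-analytic core of the paper's proof: since $\hat g(t,\cdot,\cdot)$ is positively homogeneous and convex, $\hat f$ is positively homogeneous and convex on $L^2_T(R)$, and one shows $\partial\hat f(0)=M$ by proving that $\hat f$ is the support function of $M$ --- the comparison theorem gives $\hat f(\zeta)\geq\langle\eta,\zeta\rangle$ for every $\eta\in M$, and a measurable selection attaining the maximum in $\hat g(t,\hat y_t,\hat z_t)=\max_{(\varphi,\psi)\in\partial^o g(t,y_t,z_t)}\langle(\varphi,\psi),(\hat y_t,\hat z_t)\rangle$ along the solution of the $\hat g$-BSDE gives the reverse inequality (one also needs $M$ convex and weak-star closed for the subdifferential identity). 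Only after this step does your bound $f^o(\xi;\cdot)\leq\hat f(\cdot)$ yield $\partial^o f(\xi)\subset\partial\hat f(0)=M$, which is the claimed inclusion. Without introducing the auxiliary sublinear BSDE functional $\hat f$ (or some equivalent representation result for sublinear $g$-expectations as maxima of linear BSDE functionals), your argument stops short of the theorem as stated.
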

The meaning of the equation above is that, for any $\eta\in \partial^o f(\xi) $, there exists $(\varphi_t, \psi_t)\in \partial^o g(t,y_t,z_t)$, such that
for any $\zeta \in L_T^2(R)$,
$$\langle\eta, \zeta\rangle =\tilde{y}_0= \zeta +\int_0^T (\varphi_t\tilde{y}_t +  \psi_t\tilde{z}_t)dt-\int_0^T\tilde{z}^*_t dW_t.$$
where $(\tilde{y}_t, \tilde{z}_t)$ is the solution of BSDE generated by
$h(t,y,z)=\varphi_t y +  \psi_tz$ with terminal value $\zeta$.
\begin{proof}
Let $\hat{g}(t, \hat{y}, \hat{z})=g^o(t, y_t,z_t;\hat{y}, \hat{z})$, where $(\hat{y}, \hat{z})\in R\times R^d$,
then by the Definition of generalized directional derivative, $\hat{g}(t, \hat{y}, \hat{z})$ is Lipschitz homogeneous and convex in $(\hat{y}, \hat{z})$ and
\begin{equation}\label{maxequal}
\hat{g}(t, \hat{y}, \hat{z})=\max_{(\varphi_t, \psi_t)\in \partial^o g(t,y_t,z_t)}\langle (\varphi_t, \psi_t),
(\hat{y}, \hat{z})\rangle.
\end{equation}
For $\zeta \in L_T^2(R)$, the BSDE generated by $\hat{g}$ with terminal value  $\zeta$ evolves as follows
\begin{equation}
\hat{y}_t=\zeta+\int_t^T\hat{g}(t, \hat{y}_t, \hat{z}_t) dt-\int_t^T\hat{z}^*_t dW_t, 0\leq t\leq T.
\end{equation}
Let $\hat{f}(\zeta )=\hat{y}_0$, then $\hat{f}$ is  homogeneous and convex.
Let
\begin{equation}
M:=\{\eta\in L_T^2(R)|\langle\eta, \zeta\rangle =\tilde{y}_0,
(\varphi_t, \psi_t)\in \partial^o g(t,y_t,z_t)\},
\end{equation}
where $(\tilde{y}_t, \tilde{z}_t)$ is the solution of following BSDE
\begin{equation}\label{joi-equ}
\tilde{y}_t= \zeta +\int_t^T (\varphi_t\tilde{y}_t +
\psi_t\tilde{z}_t)dt-\int_t^T\tilde{z}^*_t dW_t, 0\leq t\leq T.
\end{equation}
By the comparison theorem of BSDE, for any  $\eta\in M$, one has $\hat{f}(\zeta )\geq \langle\eta, \zeta\rangle$, and by \eqref{maxequal},
$\hat{f}(\zeta )=\max_{\eta\in M} \langle\eta, \zeta\rangle,$
 thus $\partial \hat{f}(0 )= M$.
By now, if we can proof $\partial^o f(\xi)=\partial \hat{f}(0 )$, then the theorem is proved.
But in fact, by the continuous dependence theorem and comparison proposition, $f^o(\xi, \zeta)= \hat{f}(\zeta)$ can be obtained easily.
\end{proof}

Based on the above results in non-smooth analysis, we get a necessary conditions for the optimality of \eqref{minmizerisk}.
\begin{thm}\label{necessary1}
Suppose that $\rho(\cdot)$ is a
  Lipschitz
function. If $\xi^*$ is an optimal solution of \eqref{minmizerisk}
then for some $\lambda$, there exist  $\zeta\in \partial^o \rho(\xi^*)$
and
$\eta\in\partial^o \mathcal{E}_{0,T}^g(\xi^*)$
such that
$$\zeta+ \lambda \eta= 0$$
holds.
\end{thm}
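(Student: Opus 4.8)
The plan is to treat \eqref{minmizerisk} as a constrained optimization problem on the Hilbert space $L_T^2(R)$ and apply the non-smooth Lagrange multiplier rule (as recorded in the appendix) to the objective $\rho(\cdot)$ subject to the single inequality constraint $h(\xi):=\mathcal{E}_{0,T}^g(\xi)-x\leq 0$. First I would recall that by Proposition \ref{bsdelip} the map $h$ is Lipschitz on $L_T^2(R)$, and $\rho$ is Lipschitz by hypothesis, so both generalized gradients $\partial^o\rho(\xi^*)$ and $\partial^o\mathcal{E}_{0,T}^g(\xi^*)$ are nonempty, convex, weak-$*$ compact subsets of $L_T^2(R)$ (identified with its dual). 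The optimal $\xi^*$ minimizes $\rho$ over the feasible set $\mathcal{A}(x)=\{\,h\le 0\,\}$, so it is a local (indeed global) constrained minimum.

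The core step is the exact penalization argument alluded to in the preamble of Section 2: since $h$ is Lipschitz, for a penalty constant $K$ larger than the Lipschitz modulus of $\rho$ near $\xi^*$ the point $\xi^*$ is an unconstrained local minimum of the penalized functional $F(\xi):=\rho(\xi)+K\,h^{+}(\xi)$, where $h^{+}=\max(h,0)$. Then Fermat's rule for Lipschitz functions gives $0\in\partial^o F(\xi^*)$, and the sum rule plus the chain rule for $\partial^o$ of $h^{+}=\max(h,0)$ yield
\[
0\in\partial^o\rho(\xi^*)+K\,[0,1]\,\partial^o h(\xi^*)=\partial^o\rho(\xi^*)+\mu\,\partial^o\mathcal{E}_{0,T}^g(\xi^*)
\]
for some $\mu\in[0,K]$ (using that $h$ and $\mathcal{E}_{0,T}^g$ differ by the constant $x$, so they have the same generalized gradient). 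This produces $\zeta\in\partial^o\rho(\xi^*)$ and $\eta\in\partial^o\mathcal{E}_{0,T}^g(\xi^*)$ with $\zeta+\mu\eta=0$; setting $\lambda=\mu$ gives the claim. A cleaner route avoiding the penalty constant is to invoke directly the multiplier rule for Lipschitz problems: there exist multipliers $r\ge 0$, $\mu\ge 0$, not both zero, with $0\in r\,\partial^o\rho(\xi^*)+\mu\,\partial^o\mathcal{E}_{0,T}^g(\xi^*)$.

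The one remaining obstacle is the usual nondegeneracy issue: a priori the multiplier rule only gives $(r,\mu)\neq(0,0)$, and one must rule out the abnormal case $r=0$. If $r=0$ then $0\in\mu\,\partial^o\mathcal{E}_{0,T}^g(\xi^*)$ with $\mu>0$, i.e. $0\in\partial^o\mathcal{E}_{0,T}^g(\xi^*)$ --- but this is exactly what Lemma \ref{0notbelong} forbids. Hence $r>0$, and rescaling so that $r=1$ (replacing $\mu$ by $\lambda=\mu/r$) yields $\zeta+\lambda\eta=0$ with $\zeta\in\partial^o\rho(\xi^*)$, $\eta\in\partial^o\mathcal{E}_{0,T}^g(\xi^*)$, as required. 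The Lipschitz property of $\mathcal{E}_{0,T}^g$ from Proposition \ref{bsdelip} is what makes $\partial^o\mathcal{E}_{0,T}^g(\xi^*)$ well-defined and the chain/sum rules applicable, so it is the technical hinge; Lemma \ref{0notbelong} is the conceptual hinge that upgrades the Fritz-John-type conclusion to a Karush-Kuhn-Tucker-type conclusion with $\lambda$ attached to a genuine gradient of $\rho$.
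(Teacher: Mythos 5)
Your ``cleaner route'' (the Fritz--John/multiplier rule for Lipschitz programs, with Lemma \ref{0notbelong} used to exclude the abnormal multiplier $r=0$ and then rescaling) is correct, and it rests on the same two pillars as the paper: Lipschitz continuity of $\mathcal{E}_{0,T}^g(\cdot)$ (Proposition \ref{bsdelip}), so that $\partial^o\mathcal{E}_{0,T}^g(\xi^*)$ is nonempty, and $0\notin\partial^o\mathcal{E}_{0,T}^g(\xi^*)$ as the constraint qualification. The paper packages this differently: instead of citing a multiplier rule, it uses the Fermat condition \eqref{necessaryconditon2}, $0\in\partial^o\rho(\xi^*)+N_C(\xi^*)$, together with Proposition \ref{lemcalofnor}, which gives $N_C(\xi^*)\subset\bigcup_{\lambda\geq0}\lambda\,\partial^o\mathcal{E}_{0,T}^g(\xi^*)$ exactly because of Lemma \ref{0notbelong}; since Proposition \ref{lemcalofnor} describes the sublevel set through the value $h(\xi^*)$ itself, the paper adds a short extra step for the inactive case $\mathcal{E}_{0,T}^g(\xi^*)<x$ (replace $x$ by $\tilde x=\mathcal{E}_{0,T}^g(\xi^*)$ and note $\xi^*$ stays optimal on the smaller set), a case your formulation absorbs through the possibility $\lambda=0$. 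What your route buys is a shorter argument quoting a standard theorem; what the paper's route buys is that it only uses the appendix facts it actually states.

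One caution about your first sketch: exact penalization with $K\,h^{+}$, $h=\mathcal{E}_{0,T}^g(\cdot)-x$, is \emph{not} a consequence of Lipschitzness alone. The appendix Lemma \ref{exactpenalization} penalizes with the distance $d_C$, and replacing $d_C$ by $h^{+}$ requires an error bound of the form $d_C(\xi)\leq c\,h^{+}(\xi)$ near $\xi^*$; without a constraint qualification this can fail (in $R$, take $h(\xi)=\xi^2$, $\rho(\xi)=-\xi$, $\xi^*=0$: no penalty constant makes $0$ a local minimizer of $\rho+Kh^{+}$). Here such a bound is plausible because the proof of Lemma \ref{0notbelong} exhibits a uniform decrease direction for $\mathcal{E}_{0,T}^g$, but that has to be argued, not assumed; so the theorem should be derived via the multiplier-rule formulation you give second, or via the paper's normal-cone argument, rather than via the raw $h^{+}$ penalty.
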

\begin{proof}
By Lemma \ref{0notbelong}, we have
 $0\notin \partial^o \mathcal{E}_{0,T}^g(\xi^*)$ and thus Proposition \ref{lemcalofnor} can be used to deduce that

 $$N_C(\xi^*)\subset \bigcup_{\lambda\geq 0}\lambda\partial^o \mathcal{E}_{0,T}^g(\xi^*).$$
If $\xi^*$ is an optimal solution of \eqref{minmizerisk}  satisfying
$\mathcal{E}_{0,T}^g(\xi^*)= x$,   then by
 the Fermat condition \eqref{necessaryconditon2}, there exists a nonnegative number $\lambda\geq 0$
and some $\zeta\in \partial^o \rho(\xi^*),$ $\eta\in\partial^o
\mathcal{E}_{0,T}^g(\xi^*)$ such that
$$\zeta+ \lambda \eta= 0.$$
Now supposing $\tilde{x}=\mathcal{E}_{0,T}^g(\xi^*)< x$, then we can set
$$\widetilde{C}:=\{\xi\in L_T^2(R)|\mathcal{E}_{0,T}^g(\xi)\leq \tilde{x}\}$$
and solve optimal problem on $\widetilde{C}$. It is easy to see that  $\xi^*$  is optimal on $\widetilde{C}$ if it is optimal on $C$ for $\rho(\cdot)$.
\end{proof}

In Ji and Peng \cite {sl-ji&sg-pe}, they assume that the generator is continuously differentiable with variables, in this special case, we can get a explicit form of   $\mathcal{E}_{0,T}^g(\cdot)$. To do so, we need a notation named strict differentiable for a function in Banach Space and a related theorem.
\begin{definition}[Strict differentiable, Clark\cite{fh-cl}]
A function $f(\cdot)$ defined on Banach space $X$ is called strict differentiable at $x\in X$ if there exists
$x^*\in X^*$ such that
$$\lim_{y\rightarrow x,t\rightarrow 0^+}\frac{f(y+td)-f(y)}{t}=\langle x^*, d\rangle$$
exists in any direction $d\in X$.
\end{definition}
\begin{thm} {\rm(Clark\cite{fh-cl})}
A function $f(\cdot)$ defined on Banach space $X$ is strict differentiable at $x\in
X$ as in the above definition, then  $\partial f(x)=\{x^*\}$.
\end{thm}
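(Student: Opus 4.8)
The plan is to argue directly from the definition of the Clarke generalized gradient in terms of the generalized directional derivative. Recall that, for a function $f$ that is locally Lipschitz near $x$, one has the representation
$$\partial f(x)=\bigl\{\zeta\in X^*:\ \langle\zeta,d\rangle\le f^o(x;d)\ \text{for every}\ d\in X\bigr\},$$
where $f^o(x;d)=\limsup_{y\to x,\,t\downarrow 0}\dfrac{f(y+td)-f(y)}{t}$. The first step I would carry out is to note that strict differentiability at $x$ determines $f^o(x;\cdot)$ completely: by hypothesis the limit
$$\lim_{y\to x,\,t\to 0^+}\frac{f(y+td)-f(y)}{t}=\langle x^*,d\rangle$$
exists for every direction $d$, so in particular the $\limsup$ defining $f^o(x;d)$ coincides with this limit, giving $f^o(x;d)=\langle x^*,d\rangle$ for all $d\in X$.

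With this identity in hand the rest is immediate. If $\zeta\in\partial f(x)$, then $\langle\zeta,d\rangle\le\langle x^*,d\rangle$ for all $d\in X$; applying the same inequality with $-d$ in place of $d$ yields $\langle\zeta,d\rangle\ge\langle x^*,d\rangle$, whence $\langle\zeta,d\rangle=\langle x^*,d\rangle$ for every $d$, i.e.\ $\zeta=x^*$. Conversely, $x^*$ obviously satisfies $\langle x^*,d\rangle\le f^o(x;d)$ (with equality), so $x^*\in\partial f(x)$. Hence $\partial f(x)=\{x^*\}$.

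I do not expect a real obstacle: once the correct definitions are assembled this is essentially a two-line computation. The only point deserving care is that the representation of $\partial f(x)$ via $f^o$ presupposes that $f$ is locally Lipschitz near $x$ (so that $f^o(x;\cdot)$ is finite and sublinear); this is part of the background collected in the appendix, and in the application of this paper it holds because $f=\mathcal{E}_{0,T}^g(\cdot)$ is Lipschitz on all of $L_T^2(R)$ by Proposition \ref{bsdelip}. I would also remark that the two-sided inequality collapses to an equality precisely because $d\mapsto\langle x^*,d\rangle$ is linear, which is automatic from $x^*\in X^*$; without that linearity (i.e.\ for a merely Gateaux-type directional derivative) one would only obtain an inclusion rather than the full equality $\partial f(x)=\{x^*\}$.
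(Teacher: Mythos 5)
Your proof is correct: since the paper states this result by citation to Clarke without giving its own proof, the natural comparison is with the standard argument, and yours is exactly that — strict differentiability forces $f^o(x;\cdot)=\langle x^*,\cdot\rangle$, and the two-sided inequality obtained from $d$ and $-d$ collapses $\partial f(x)$ to $\{x^*\}$. Your closing caveat about local Lipschitz continuity (needed for the $f^o$-representation of $\partial f$, and satisfied here since $\mathcal{E}_{0,T}^g(\cdot)$ is Lipschitz by Proposition \ref{bsdelip}) is the right point to flag and is handled adequately.
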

Based on the above notations and results, we have
\begin{lem}\label{gen-gra-bsde}
If  $g$ is continuously differentiable in  $(y,z)\in R\times R^d$ with bounded derivatives,
$(y_t=\mathcal{E}_{0,t}^g(\xi),z_t)$ is the solution of BSDE with terminal value  $\xi$, then
$$ \partial^o \mathcal{E}_{0,T}^g(\xi)=  \{q_T  \},  $$
 where $q_T\in L^2_T(R), \forall
 \eta\in L^2_T(R), \langle q_T, \eta\rangle
 =\tilde{y}_0=\mathcal{E}_{0,T}^{\tilde{g}}(\eta)$,
 $\tilde{g}(t,y,z)=g_y(y_t,z_t)y+g_z(y_t,z_t)z$,
\begin{equation}\label{ciweifen}
\tilde{y}_t= \eta +  \int_t^T
\tilde{g}(s,\tilde{y}_s,\tilde{z}_s)ds-\int_t^T\tilde{z}^*_sdW_s, 0\leq t\leq T,
\end{equation}
 i.e., $\tilde{y}_t$ is the solution of BSDE with generator $\tilde{g}(t,y,z)$.
\end{lem}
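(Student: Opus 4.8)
The plan is to specialize the previous theorem (the calculus rule $\partial^o f(\xi)\subset\int_0^T\langle\partial^o g(t,y_t,z_t),(\tilde y_t,\tilde z_t)\rangle dt-\int_0^T\tilde z_t^*dW_t$) to the case where $g$ is continuously differentiable in $(y,z)$ with bounded derivatives, and to upgrade the inclusion to an equality with a singleton. First I would observe that when $g(t,\cdot,\cdot)$ is $C^1$, the Clarke generalized gradient of the map $(y,z)\mapsto g(t,y,z)$ at the point $(y_t,z_t)$ reduces to the single point $(g_y(t,y_t,z_t),g_z(t,y_t,z_t))$; this is the finite-dimensional version of the strict-differentiability theorem of Clarke quoted just above. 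Hence the generalized directional derivative satisfies $g^o(t,y_t,z_t;\hat y,\hat z)=g_y(t,y_t,z_t)\hat y+g_z(t,y_t,z_t)\hat z$, i.e.\ the function $\hat g$ introduced in the proof of the preceding theorem is exactly the linear generator $\tilde g(t,y,z)=g_y(t,y_t,z_t)y+g_z(t,y_t,z_t)z$.

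Next I would invoke the identity $f^o(\xi;\zeta)=\hat f(\zeta)$ established in the proof of the previous theorem, where $\hat f(\zeta)=\tilde y_0$ with $(\tilde y_t,\tilde z_t)$ solving the linear BSDE \eqref{ciweifen}. Because $\tilde g$ is linear in $(y,z)$, the map $\zeta\mapsto\tilde y_0$ is linear (superposition for BSDE: the solution with terminal data $\zeta_1+\zeta_2$ is the sum of the solutions) and bounded on $L_T^2(R)$ by the a priori estimates for linear BSDE with bounded coefficients; by the Riesz representation theorem there is a unique $q_T\in L_T^2(R)$ with $\langle q_T,\eta\rangle=\tilde y_0=\mathcal{E}_{0,T}^{\tilde g}(\eta)$ for all $\eta$. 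Since $f^o(\xi;\cdot)$ is then a genuine linear functional rather than merely sublinear, the support set $\partial^o f(\xi)=\{\,\eta:\langle\eta,\zeta\rangle\le f^o(\xi;\zeta)\ \forall\zeta\,\}$ collapses to the single element that represents it, namely $q_T$; equivalently one can quote Clarke's theorem directly once strict differentiability of $f=\mathcal{E}_{0,T}^g(\cdot)$ at $\xi$ is checked, which follows from the continuous dependence theorem for BSDE applied to the difference quotients $t^{-1}(\mathcal{E}_{0,T}^g(\xi'+t\eta)-\mathcal{E}_{0,T}^g(\xi'))$ as $\xi'\to\xi$, $t\downarrow0$.

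The main obstacle I anticipate is the limit step showing that $\hat f(\zeta)$ really is the \emph{full} directional derivative and that the convergence in $t^{-1}(\mathcal{E}_{0,T}^g(\xi+t\eta)-\mathcal{E}_{0,T}^g(\xi))\to\tilde y_0$ is uniform enough (as the base point varies near $\xi$) to yield strict differentiability rather than mere Gateaux differentiability: one must linearize the generator along the path, write the difference of the two BSDE solutions as a linear BSDE whose coefficients are the integrated Jacobians $\int_0^1 g_y(t,\lambda y_t^{(1)}+(1-\lambda)y_t^{(2)},\dots)d\lambda$, and use dominated convergence together with the boundedness of $g_y,g_z$ and the $L^2$-stability estimate for linear BSDE to pass to the limit. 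Once the difference quotient is controlled uniformly in the base point via these a priori bounds, the identification $\partial^o\mathcal{E}_{0,T}^g(\xi)=\{q_T\}$ is immediate from the strict-differentiability theorem.
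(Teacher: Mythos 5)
Your proposal is correct and follows essentially the same route as the paper: identify the generalized directional derivative of $\mathcal{E}_{0,T}^g(\cdot)$ at $\xi$ with the linear functional $\eta\mapsto\tilde y_0=\mathcal{E}_{0,T}^{\tilde g}(\eta)$, represent it by $q_T$ via Riesz, and invoke Clarke's strict-differentiability theorem to collapse $\partial^o\mathcal{E}_{0,T}^g(\xi)$ to $\{q_T\}$. The only difference is that the paper simply cites Ji and Peng for the strict differentiability of $\mathcal{E}_{0,T}^g(\cdot)$, whereas you sketch the underlying linearization and stability estimate yourself, which is the standard argument behind that citation.
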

\begin{proof}
By Ji and Peng \cite {sl-ji&sg-pe}, if $g$ is continuously differentiable in  $(y,z)\in \times R\times R^d$ with bounded derivatives, then
$\mathcal{E}_{0,T}^g(\cdot)$ is strict differentiable and
for any $\eta\in L_T^2(R)$,
$$\lim_{\zeta\rightarrow \xi,t\rightarrow 0^+}\frac{\mathcal{E}_{0,T}^g(\zeta+t\eta)-\mathcal{E}_{0,T}^g(\zeta)}{t}=\tilde{y}_0.$$
It is obviously that the function $\tilde{y}_0=\mathcal{E}_{0,T}^{\tilde{g}}(\eta)$ deduced by \eqref{ciweifen} is linear continuous on $L_T^2(R)$,
hence by the Riesz representation theorem, there exists  $q_T\in L_T^2(R)$ such that $\langle q_T,
\eta\rangle=\mathcal{E}_{0,T}^{\tilde{g}}(\eta)$ holds for any $ \eta \in
L_T^2(R)$
and the corresponding sub-differential set contains only one element
$q_T$.
\end{proof}
\begin{corollary}\label{necessary2}
Suppose  $g(t,y,z)$ has bounded continuous  derivatives in $(y,z)$  and
$(y_t=\mathcal{E}_{0,t}^g(\xi),z_t)$ is a solution of BSDE with terminal value $\xi$.
If $\xi^*$ is an optimal solution of \eqref{minmizerisk},
then there exists $\zeta\in \partial^o \rho(\xi^*)$  and some positive number $\lambda$
such that
$$\zeta+ \lambda q_T= 0,$$
 where $q_T$ is obtained by the Riesz representation theorem through \eqref{ciweifen}.
\end{corollary}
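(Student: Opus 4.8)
The plan is to obtain this corollary as a direct specialization of Theorem \ref{necessary1}, using Lemma \ref{gen-gra-bsde} to pin down the generalized gradient of the value map. First I would invoke Theorem \ref{necessary1}: since $\rho(\cdot)$ is Lipschitz and $\mathcal{E}_{0,T}^g(\cdot)$ is Lipschitz (Proposition \ref{bsdelip}) with $0\notin\partial^o\mathcal{E}_{0,T}^g(\xi^*)$ by Lemma \ref{0notbelong}, the Fermat condition \eqref{necessaryconditon2} together with the normal-cone inclusion $N_C(\xi^*)\subset\bigcup_{\lambda\geq 0}\lambda\,\partial^o\mathcal{E}_{0,T}^g(\xi^*)$ yields, for the optimal $\xi^*$, a number $\lambda\geq 0$, a $\zeta\in\partial^o\rho(\xi^*)$ and an $\eta\in\partial^o\mathcal{E}_{0,T}^g(\xi^*)$ with $\zeta+\lambda\eta=0$. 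As in the proof of Theorem \ref{necessary1}, if the budget constraint $\mathcal{E}_{0,T}^g(\xi^*)\leq x$ is inactive at $\xi^*$ one first replaces $x$ by $\tilde x=\mathcal{E}_{0,T}^g(\xi^*)$, for which $\xi^*$ remains optimal, so we may assume $\mathcal{E}_{0,T}^g(\xi^*)=\tilde x$ and apply the Fermat condition there.

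Next I would exploit the extra regularity of $g$. Since $g(t,y,z)$ has bounded continuous derivatives in $(y,z)$ and $(y_t=\mathcal{E}_{0,t}^g(\xi^*),z_t)$ solves the BSDE with terminal value $\xi^*$, Lemma \ref{gen-gra-bsde} applies at $\xi^*$ and gives $\partial^o\mathcal{E}_{0,T}^g(\xi^*)=\{q_T\}$, where $q_T\in L_T^2(R)$ is the Riesz representative of the bounded linear functional $\eta\mapsto\tilde y_0=\mathcal{E}_{0,T}^{\tilde g}(\eta)$ determined by the linear BSDE \eqref{ciweifen} with $\tilde g(t,y,z)=g_y(y_t,z_t)y+g_z(y_t,z_t)z$. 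Consequently the element $\eta$ produced in the previous step is forced to equal $q_T$, and the relation $\zeta+\lambda\eta=0$ becomes exactly $\zeta+\lambda q_T=0$ with $\zeta\in\partial^o\rho(\xi^*)$.

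It then remains to upgrade $\lambda\geq 0$ to $\lambda>0$, and this is the only genuinely delicate point. If $\lambda=0$, then $\zeta+\lambda q_T=0$ forces $0=\zeta\in\partial^o\rho(\xi^*)$, i.e.\ $\xi^*$ is an unconstrained Clarke-stationary point of the risk functional; under the standing interpretation of $\rho$ as a nondegenerate risk measure — for instance if $\rho$ is monotone so that $0\notin\partial^o\rho$, or if one simply adds the mild hypothesis $0\notin\partial^o\rho(\xi^*)$ — this case is ruled out, leaving $\lambda>0$. I expect the main obstacle, if any, to lie precisely here rather than in the BSDE estimates: the remainder is a bookkeeping combination of Theorem \ref{necessary1} and Lemma \ref{gen-gra-bsde}, both already established, whereas the strict positivity of the multiplier is where an implicit nondegeneracy assumption on $\rho$ has to be made explicit.
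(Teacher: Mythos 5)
Your proof is correct and follows essentially the route the paper intends: the corollary is just Theorem \ref{necessary1} combined with Lemma \ref{gen-gra-bsde}, which collapses $\partial^o\mathcal{E}_{0,T}^g(\xi^*)$ to the singleton $\{q_T\}$ so that $\eta=q_T$ in the multiplier relation. Your remark about the multiplier is a fair catch: Theorem \ref{necessary1} only delivers $\lambda\geq 0$, so the strict positivity claimed in the corollary does require an additional nondegeneracy hypothesis such as $0\notin\partial^o\rho(\xi^*)$, a point the paper leaves implicit.
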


The key points for our successes to use non-smooth results are  Proposition \ref{bsdelip} and  Lemma \ref{0notbelong} and we can take $h(\cdot)$ as $\mathcal{E}_{0,T}^g(\cdot)$ in Proposition \ref{lemcalofnor}.  But in constrained case, the function $\mathcal{E}_{0,T}^{g,\phi}(\cdot)$ fails in both Proposition \eqref{bsdelip}  and Lemma \eqref{0notbelong}. In such case, we try to describe $\partial d^o_C(\xi)$ or $N_C(\xi)$ for $\xi\in C$ directly, where
$C:=\mathcal{A}^\phi(x):=\{\xi\in L_T^2(R)|\mathcal{E}_{t,T}^{g,\phi}(\xi)\leq x\}$. Thanks to the lower-semi continuity of $\mathcal{E}_{t,T}^{g,\phi}(\cdot)$,
the constrained set $C$ in our optimal problem  \eqref{cminmizerisk} is closed and many results about distance function $d_C(\cdot)$ of closed set $C$ thus can be applied here.

\section{Examples}
In this section, some examples are proposed to illustrate the obtained result.
In Ji and Peng \cite {sl-ji&sg-pe}, they
 considered the following optimal problem to find $\xi^*$ such that
\begin{equation}\label{optimizationji}
J(\xi):=E[u(\xi)]
\end{equation}
is minimized under the following constraints
\[
\left\{
\begin{array}{lll}
E[\varphi(\xi)]=c,\\
\mathcal{E}_{0,T}^{g}(\xi)=x,\\
\xi\in U,
\end{array}
\right.
\]
where
$$U=\{\xi|\xi\in L_T^2(\Omega), \xi\geq 0, \quad a.s.\}$$
and functions $u,\varphi$ are both continuous differentiable with bounded derivatives. In the framework of  Ji and Peng \cite
{sl-ji&sg-pe}, if we take the notations of non-smooth analysis, Ji and Peng \cite
{sl-ji&sg-pe} obtained the following result.
\begin{thm}\label{their}
If $\xi^*$ is an optimal solution of \eqref{optimizationji}, then there exist real number $
h^1$ and non-positive number $h$  such that
\begin{equation}
hu_x\xi^*(w))+h^1\varphi_x(\xi^*(w))+q_T\geq 0, \forall w\in M,
a.s.,
\end{equation}
\begin{equation}
hu_x(\xi^*(w))+h^1\varphi_x(\xi^*(w))+q_T(w)= 0, \forall w\in M^c,
a.s.,
\end{equation}
where $q_T$ is the terminal value of BSDE \eqref{joi-equ},
$M:=\{w|\xi^*(w)= 0\}$.
\end{thm}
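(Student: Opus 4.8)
The plan is to realize $\xi^*$ as a minimizer of the Lipschitz functional $J(\xi)=E[u(\xi)]$ over the closed set
$$C:=\{\xi\in L^2_T(R):E[\varphi(\xi)]=c\}\cap\{\xi\in L^2_T(R):\mathcal{E}^g_{0,T}(\xi)=x\}\cap U,$$
and to run the non-smooth Lagrange/Fermat machinery of the appendix (in the spirit of the proof of Theorem \ref{necessary1}), feeding in the subdifferentials that are already at our disposal. First I would record the three relevant Clarke subdifferentials at $\xi^*$. Since $u$ is continuously differentiable with bounded derivative, a Nemytskii-operator argument combined with the pointwise mean value identity shows that $\xi\mapsto E[u(\xi)]$ is strictly differentiable on $L^2_T(R)$, hence $\partial^o J(\xi^*)=\{u_x(\xi^*(\cdot))\}$; the same reasoning gives $\partial^o\big(E[\varphi(\cdot)]\big)(\xi^*)=\{\varphi_x(\xi^*(\cdot))\}$; and Lemma \ref{gen-gra-bsde} gives $\partial^o\mathcal{E}^g_{0,T}(\xi^*)=\{q_T\}$, the element produced by the Riesz representation theorem through \eqref{ciweifen} (equivalently \eqref{joi-equ}). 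Finally, because $U$ is the closed convex cone of nonnegative elements of $L^2_T(R)$, its normal cone at $\xi^*\ge 0$ is
$$N_U(\xi^*)=\{\nu\in L^2_T(R):\nu\le 0\ \text{a.s. and}\ \nu=0\ \text{on}\ M^c\},\qquad M:=\{w:\xi^*(w)=0\}.$$

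Next I would invoke the constraint qualification. By Lemma \ref{0notbelong}, $0\notin\partial^o\mathcal{E}^g_{0,T}(\xi^*)$, i.e. $q_T\neq 0$, so the bounded linear functional $\eta\mapsto\langle q_T,\eta\rangle$ maps onto $R$; this non-degeneracy is exactly what guarantees that the multiplier attached to the equality constraint $\mathcal{E}^g_{0,T}(\xi)=x$ cannot vanish, so after rescaling it may be normalized to $1$ — which is why $q_T$ appears with unit coefficient in the conclusion. With this in hand, the non-smooth multiplier rule (Proposition \ref{lemcalofnor} together with the Fermat condition of the appendix), applied to minimizing $J$ subject to the two scalar equality constraints and the set constraint $\xi\in U$, yields a non-positive number $h$ (the objective multiplier, in the sign convention of the appendix), a real number $h^1$, and some $\nu\in N_U(\xi^*)$ such that, as an identity in $L^2_T(R)$,
$$h\,u_x(\xi^*(\cdot))+h^1\,\varphi_x(\xi^*(\cdot))+q_T+\nu=0.$$

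It then remains to read this off pointwise, using the explicit form of $N_U(\xi^*)$. On $M^c=\{\xi^*>0\}$ one has $\nu=0$ a.s., hence $h\,u_x(\xi^*(w))+h^1\,\varphi_x(\xi^*(w))+q_T(w)=0$ for a.e.\ $w\in M^c$; on $M$ one only has $\nu\le 0$ a.s., hence $h\,u_x(\xi^*(w))+h^1\,\varphi_x(\xi^*(w))+q_T(w)=-\nu(w)\ge 0$ for a.e.\ $w\in M$. These are precisely the two asserted relations.

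I expect the main obstacle to be the multiplier step itself: justifying the non-smooth Lagrange/Fermat rule in the infinite-dimensional Hilbert space $L^2_T(R)$ in the presence of a genuine two-sided equality constraint built from the nonlinear map $\mathcal{E}^g_{0,T}$, and in particular verifying the constraint qualification that both makes the rule normal and pins the $\mathcal{E}^g$-multiplier to a nonzero value — this is where $q_T\neq 0$ (Lemma \ref{0notbelong}) is indispensable. A secondary technical point is the strict differentiability of $\xi\mapsto E[u(\xi)]$ and $\xi\mapsto E[\varphi(\xi)]$: one must upgrade the pointwise identity $\tfrac{u(\zeta+t\eta)-u(\zeta)}{t}=u_x(\zeta+\theta t\eta)\,\eta$ to convergence of the expectations as $(\zeta,t)\to(\xi,0^+)$, which follows from the boundedness and continuity of $u_x$ via dominated convergence (arguing along subsequences). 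Everything else — the subdifferential formulas and the passage from the abstract $L^2$-identity to the a.s.\ pointwise statement — is routine given the results already established.
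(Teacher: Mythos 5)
There is a mismatch of scope first: the paper does not prove Theorem \ref{their} at all --- it is quoted from Ji and Peng, who obtain it by Ekeland's variational principle and the terminal perturbation method; the paper's own nonsmooth machinery is only used afterwards to derive the weaker, unsigned condition \eqref{our}, and only after absorbing the constraint $E[\varphi(\xi)]=c$ into the objective and dropping the constraint $\xi\geq 0$ (which is precisely why \eqref{our} has no $M$/$M^c$ dichotomy). Your plan to reprove the full theorem with the appendix's tools is therefore a genuinely different route, but as written it has real gaps at its central step. The ``non-smooth multiplier rule'' you invoke is not available from the paper: Proposition \ref{lemcalofnor} treats a single sublevel-set constraint $\{h\leq h(x)\}$ with $0\notin\partial^o h(x)$, and the Fermat condition \eqref{necessaryconditon2} only reduces the problem to computing $N_C(\xi^*)$ for the feasible set $C$, which here is the intersection of two \emph{equality} constraints ($E[\varphi(\xi)]=c$ and $\mathcal{E}^g_{0,T}(\xi)=x$) with the cone $U$. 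A normal cone of an intersection is contained in the sum of the normal cones only under a transversality/qualification condition that you never state or verify, and equality constraints need Clarke's full multiplier rule, which is not in the appendix. This is exactly the step where Ji--Peng's Ekeland argument does its work, so it cannot be waved through.

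Two further points would fail as stated. First, the claim that $0\notin\partial^o\mathcal{E}^g_{0,T}(\xi^*)$ (i.e.\ $q_T\neq 0$, Lemma \ref{0notbelong}) ``guarantees that the multiplier attached to $\mathcal{E}^g_{0,T}(\xi)=x$ cannot vanish'' is backwards: nondegeneracy conditions of this type are used to make the \emph{objective} multiplier nonzero (as in Theorem \ref{necessary1}, where only this one constraint is present), not to force a particular constraint multiplier to be nonzero. In your Fritz--John-type identity $\lambda_0 u_x+\lambda_1\varphi_x+\lambda_2 q_T+\nu=0$ nothing prevents $\lambda_2=0$, in which case $q_T$ drops out and the normalization to coefficient $1$ is impossible; excluding this requires a separate argument (in Ji--Peng it comes from the strict comparison theorem used inside the Ekeland construction, together with a normalization of the multiplier sequence). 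Second, the sign bookkeeping does not come out as you assert: the appendix's Fermat rule attaches coefficient $+1$ (hence a nonnegative multiplier) to the minimized objective, so after dividing by $\lambda_2$ to give $q_T$ unit coefficient you obtain either $h\geq 0$ with the inequality $\geq 0$ on $M$, or $h\leq 0$ with the inequality reversed on $M$ --- not the combination ``$h\leq 0$ and $\geq 0$ on $M$'' in the statement; asserting that $h\le 0$ holds ``in the sign convention of the appendix'' has no basis, since the appendix has no such convention. The subdifferential computations themselves ($\partial^o E[u(\cdot)](\xi^*)=\{u_x(\xi^*)\}$, Lemma \ref{gen-gra-bsde}, and $N_U(\xi^*)=\{\nu\le 0 \text{ a.s.},\ \nu=0 \text{ on } M^c\}$) are fine, as is the final pointwise reading; the gap is the multiplier step and the qualification/sign analysis that the theorem's precise form requires.
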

 Since $E[\varphi(\xi)]=c$ is a constant,
then we can let $\rho(\xi):=E[u(\xi)+\varphi(\xi)]$ and transfer \eqref{optimizationji}
to our optimal problem \eqref{minmizerisk}.
For  $\rho(\xi):=E[u(\xi)+\varphi(\xi)]$, we have the following proposition.
\begin{pro}
If $u(x):R\rightarrow R$ is continuous differentiable with bounded derivatives,
then the function $f(\xi)=E[u(\xi)]$ defined on  $L_T^2(R)$ is absolutely differentiable,
and its sub-differential is $u_x(\xi)$.
\end{pro}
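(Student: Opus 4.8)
The claim is that $f(\xi)=E[u(\xi)]$ is strictly differentiable on $L_T^2(R)$ (which is what ``absolutely differentiable'' should mean here, in the sense of the Definition recalled just above) with gradient the random variable $u_x(\xi)$, where $u_x(\xi)$ is identified with the element of $L_T^2(R)\cong(L_T^2(R))^*$ acting by $d\mapsto E[u_x(\xi)\,d]$; once strict differentiability is in hand, $\partial^o f(\xi)=\{u_x(\xi)\}$ follows immediately from the theorem of Clark quoted above. First I would dispatch the preliminaries: if $K$ is a bound for $u_x$, then $u$ is globally $K$-Lipschitz, hence $|f(\xi)-f(\eta)|\le E|u(\xi)-u(\eta)|\le KE|\xi-\eta|\le K\|\xi-\eta\|$, so $f$ is Lipschitz on $L_T^2(R)$ and $\partial^o f$ is well defined; also $u_x(\xi)\in L^\infty\subset L_T^2(R)$, so the candidate gradient is a legitimate element of the dual.

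Next I would write the difference quotient in integral form. For $\eta\in L_T^2(R)$, $d\in L_T^2(R)$ and $t>0$, the fundamental theorem of calculus gives
$$\frac{f(\eta+td)-f(\eta)}{t}=E\Bigl[\Bigl(\int_0^1 u_x(\eta+std)\,ds\Bigr)\,d\Bigr],$$
hence
$$\frac{f(\eta+td)-f(\eta)}{t}-E[u_x(\xi)\,d]=E\Bigl[\Bigl(\int_0^1\bigl(u_x(\eta+std)-u_x(\xi)\bigr)\,ds\Bigr)\,d\Bigr].$$
By Cauchy--Schwarz (together with Jensen in the $ds$-integral) the right-hand side is bounded in absolute value by $\bigl(\sup_{s\in[0,1]}\|u_x(\eta+std)-u_x(\xi)\|\bigr)\,\|d\|$. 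Since $\|(\eta+std)-\xi\|\le\|\eta-\xi\|+t\|d\|$ tends to $0$ uniformly in $s\in[0,1]$ as $\eta\to\xi$ and $t\downarrow0$, the whole estimate will tend to $0$ provided the superposition operator $N:\zeta\mapsto u_x(\zeta)$ is continuous from $L_T^2(R)$ into $L_T^2(R)$; granting that, one obtains $\lim_{\eta\to\xi,\,t\downarrow0}\frac{f(\eta+td)-f(\eta)}{t}=E[u_x(\xi)\,d]=\langle u_x(\xi),d\rangle$ for every direction $d$, which is exactly strict differentiability at $\xi$.

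The continuity of $N$ is the only genuine point, and it is handled by the standard subsequence argument: if $\zeta_n\to\xi$ in $L_T^2(R)$, every subsequence has a further subsequence converging a.s., along which $u_x(\zeta_{n_k})\to u_x(\xi)$ a.s. by continuity of $u_x$, with $|u_x(\zeta_{n_k})|\le K$; dominated convergence then gives $u_x(\zeta_{n_k})\to u_x(\xi)$ in $L_T^2(R)$, and since the limit does not depend on the chosen subsequence, the full sequence converges, so $N$ is continuous. The same device also disposes of the ``$\sup_{s\in[0,1]}$'' above: were it to stay away from $0$, one could extract $s_n$ along which $\eta_n+s_n t_n d\to\xi$ in $L_T^2(R)$ while $u_x(\eta_n+s_n t_n d)$ stays away from $u_x(\xi)$, contradicting continuity of $N$. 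Combining the three pieces, $f$ is strictly differentiable at every $\xi$ with derivative $u_x(\xi)$, and Clark's theorem yields $\partial^o f(\xi)=\{u_x(\xi)\}$, which is the assertion.
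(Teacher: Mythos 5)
Your proof is correct and follows essentially the route the paper intends: its one-line proof ("Fubini theorem and bounded convergence theorem") is exactly your integral representation of the difference quotient plus the dominated-convergence continuity of $\zeta\mapsto u_x(\zeta)$, which you carry out in full detail. The only addition is that you make explicit the reading of ``absolutely differentiable'' as strict differentiability and invoke Clark's theorem to conclude $\partial^o f(\xi)=\{u_x(\xi)\}$, a step the paper leaves implicit.
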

\begin{proof}
It can be proved by Fubini theorem and bounded convergence theorem.
\end{proof}
 Noting that when $u$ and $\varphi$ are both continuously differentiable, by the above Proposition, $\rho(\xi)$ is absolutely differentiable,
then there exists only one element  in $\partial \rho(\xi)$,
i.e., $u_x(\xi)+\varphi_x(\xi)$,
where $u_x(\cdot),\varphi_x(\cdot)$ is the corresponding derivative.
At the same time, by Lemma \ref{gen-gra-bsde},
when the generator $g$ of  BSDE is continuously differentiable,
the sub-differential of the function  $\mathcal{E}_{0,T}^{g}(\xi)$ deduced by BSDE contains only $q_T$,
then by Corollary  \ref{necessary2}, there exists a number $\lambda$, such that
\begin{equation}\label{our}
q_T+ \lambda (u_x(\xi)+\varphi_x(\xi)) =0.
\end{equation}
We consider a similar optimal investment problem by non-smooth analysis via BSDE approach.
\begin{example}\label{example1}\rm
Minimize
\begin{equation}
\min E[\xi^2]-c^2
\end{equation}
in a set of variables satisfying the following constraints
\[
\left\{
\begin{array}{lll}
E[\xi]=c,\\
\mathcal{E}_{0,T}^{g}(\xi)\leq x,\\
\xi\in L^2_T(R), \quad \xi\geq 0, \ \mbox{a.s.},
\end{array}
\right.
\]
where $g(t,y,z)=r(t)y+\theta(t)z$,
$r(t)$ and $\theta(t)$ are coefficients derived from financial market satisfying suitable measurable and integrable conditions.
\end{example}
In this example, since $E[\xi]=c$ is a constant, for any $b\in R$, we take $\rho(\xi):=E[\xi^2+b\xi]$ and it is obviously absolutely differentiable, then the sub-differential at  $\xi^*$ only contains $2\xi^*+b$.
It is easy to get $\partial\mathcal{E}_{0,T}^{g}(\xi^*)=\{q_T\}$,
where $(\tilde{y}_t, \tilde{y}_t)$ is the solution of following BSDE
$$\tilde{y}_t=\xi^*+\int_t^T
(r(s)\tilde{y}_s+\theta(s)\tilde{z}_s)ds-\int_t^T\tilde{z}^*_sdW_s, 0\leq t \leq T.$$
Then, if $\xi^*$ is an optimal solution of this example, then by
Theorem \ref{necessary1} or Corollary  \ref{necessary2},
there exists a number $\lambda_b$ such that
$$2\xi^*+b+\lambda_b q_T=0$$
holds.

The virtue of our method can help us  consider the optimal problem when the expectation is not lower than a level.
\begin{example}\label{example2}\rm
Finding an optimal $\xi^*$ in the following set
\[
\left\{
\begin{array}{lll}
E[\xi]\geq c,\\
\mathcal{E}_{0,T}^{g}(\xi)\leq x,\\
\xi\in L^2_T(R), \quad \xi\geq 0, \ \mbox{a.s.},
\end{array}
\right.
\]
to minimize
\begin{equation}
\min E[\xi^2]-E^2[\xi].
\end{equation}
\end{example}
Because the constraint on the expectation is not a constant,
we take $\rho(\xi):=E[\xi^2]-E^2[\xi]$. We combine  the constraints on the expectation and  initial value of investment together to get the following new constraint
$$\mathcal{A}:=\{\xi\in L_T^2(R)|h(\xi)\leq 0\},$$
where $h(\xi):=\max\{\mathcal{E}_{0,T}^{g,\phi}(\xi)-x, -E[\xi]+c\}$.
\begin{lem} {\rm(Clark \cite{fh-cl})}
Supposing that $\{f_i, i=1,2,\cdots n\}$ is a set of  Lipschitz  functions, we define
$$f(x):=\max\{f_i(x)|i=1,2,\cdots n\}.$$
Let $I(x)$ be the subset of index satisfying $f_i(x)=f(x)$, then
\begin{equation}
\partial f(x)\subset co\{\partial f_i(x):i\in I(x)\}.
\end{equation}
Furthermore, if $f_i$ is normal, then the equality holds,
where $coA$ is the convex hull of  $A$.
\end{lem}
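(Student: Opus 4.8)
The plan is to reduce the inclusion to a one–dimensional inequality between generalized directional derivatives, exploiting the fact that for a locally Lipschitz $h$ the Clarke subdifferential $\partial^o h(x)$ is the unique nonempty (weak$^*$-)compact convex set whose support function is $h^o(x;\cdot)$; equivalently, for a closed convex set $K$ one has $\partial^o h(x)\subset K$ if and only if $h^o(x;v)\le\sigma_K(v)$ for every direction $v$. Since $f=\max_i f_i$ is again Lipschitz, $f^o(x;\cdot)$ is well defined, and because the support function of the convex hull of finitely many compact convex sets is the pointwise maximum of their support functions, $\sigma_{co\{\partial^o f_i(x):i\in I(x)\}}(v)=\max_{i\in I(x)}\sigma_{\partial^o f_i(x)}(v)=\max_{i\in I(x)}f_i^o(x;v)$. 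Hence the asserted inclusion is equivalent to
\[
f^o(x;v)\le\max_{i\in I(x)}f_i^o(x;v)\qquad\text{for all }v.
\]

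First I would record a localization fact: if $j\notin I(x)$ then $f_j(x)<f(x)=f_{i_0}(x)$ for any fixed $i_0\in I(x)$, so by continuity of the (Lipschitz) functions $f_i$ there is a neighborhood $U$ of $x$ on which $f_j<f_{i_0}\le f$; intersecting over the finitely many inactive indices yields a neighborhood $U$ of $x$ on which $f(y)=\max_{i\in I(x)}f_i(y)$, i.e.\ only indices active at $x$ realize the maximum near $x$. To prove the displayed inequality, pick sequences $y_k\to x$, $t_k\downarrow0$ realizing the $\limsup$ that defines $f^o(x;v)$. For $k$ large, $y_k$ and $y_k+t_kv$ lie in $U$, so $f(y_k+t_kv)=f_{i(k)}(y_k+t_kv)$ for some $i(k)\in I(x)$ while $f(y_k)\ge f_{i(k)}(y_k)$; therefore
\[
\frac{f(y_k+t_kv)-f(y_k)}{t_k}\le\frac{f_{i(k)}(y_k+t_kv)-f_{i(k)}(y_k)}{t_k}.
\]
Passing to a subsequence on which $i(k)\equiv i_*\in I(x)$ (possible since $I(x)$ is finite) and taking $\limsup$ gives $f^o(x;v)\le f_{i_*}^o(x;v)\le\max_{i\in I(x)}f_i^o(x;v)$. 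This proves the inclusion for every $n$ at once, with no induction.

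For the equality under normality, recall that $f_i$ normal means $f_i^o(x;\cdot)$ coincides with the ordinary one-sided directional derivative $f_i'(x;\cdot)$. Using the same localization fact I would check that $f=\max_i f_i$ is again normal and that $f'(x;v)=\max_{i\in I(x)}f_i'(x;v)$: for small $t$ only the indices in $I(x)$ contribute to the difference quotient $\frac{f(x+tv)-f(x)}{t}$, and the directional derivative of a finite maximum distributes over the active set. Then $f^o(x;v)\ge f'(x;v)=\max_{i\in I(x)}f_i'(x;v)=\max_{i\in I(x)}f_i^o(x;v)$, which together with the reverse inequality already obtained forces equality, so $f^o(x;\cdot)=\sigma_{co\{\partial^o f_i(x):i\in I(x)\}}$. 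Since $\partial^o f(x)$ and $co\{\partial^o f_i(x):i\in I(x)\}$ are both nonempty (weak$^*$-)compact convex sets — the latter because it is the continuous image of the compact set $\{(\lambda_i)_{i\in I(x)}\in[0,1]^{I(x)}:\sum_i\lambda_i=1\}\times\prod_{i\in I(x)}\partial^o f_i(x)$ under $((\lambda_i),(u_i))\mapsto\sum_i\lambda_iu_i$ — and they share the same support function, they coincide.

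The step I expect to be the main obstacle is the localization and its careful use inside the $\limsup$: one must be certain that for $y$ near $x$ the maximizing index stays inside $I(x)$, and that extracting a constant-index subsequence along a $\limsup$-realizing sequence is legitimate. Once that is in place, the rest is bookkeeping with support functions and the standard identity $h^o(x;\cdot)=\sigma_{\partial^o h(x)}(\cdot)$.
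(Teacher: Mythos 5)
Your proof is correct: the reduction to the support-function inequality $f^o(x;v)\le\max_{i\in I(x)}f_i^o(x;v)$, the localization to the active index set, the constant-index subsequence extraction, and the regularity ("normal") argument for equality are all sound, and the weak$^*$-compactness of the convex hull is justified properly. Note that the paper itself gives no proof of this lemma; it is quoted directly from Clarke's book, and your argument is essentially the standard proof given there (Clarke's result on the subdifferential of a pointwise maximum), so there is no divergence to report.
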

By the Lemma stated above and Theorem \ref{necessary1}, we have the following theorem.
\begin{thm}
 If $\xi^*$ is an optimal solution of Example  \ref{example2}, then there exist a nonnegative number  $\lambda$ and $a\in [0,1]$ such that

\[
\left\{
\begin{array}{lll}
2(\xi^*+E[\xi^*])+\lambda q_T=0, &\text{当}\mathcal{E}_{0,T}^g(\xi^*)-x>-E[\xi^*]+c,\\
2(\xi^*+E[\xi^*])-\lambda=0, &\text{当}\mathcal{E}_{0,T}^g(\xi^*)-x<-E[\xi^*]+c,\\
2(\xi^*+E[\xi^*])+\lambda((1-a)q_T-a)=0,
&\text{当}\mathcal{E}_{0,T}^g(\xi^*)-x=-E[\xi^*]+c.
\end{array}
\right.
\]
\end{thm}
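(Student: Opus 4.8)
\emph{Proof strategy.} The plan is to recast Example~\ref{example2} as the minimization of a Lipschitz functional subject to a single inequality constraint and then to repeat the normal-cone/Fermat argument of Theorem~\ref{necessary1}, with the Clarke max-rule Lemma stated above supplying the extra flexibility. Put $h_{1}(\xi):=\mathcal{E}_{0,T}^{g}(\xi)-x$ and $h_{2}(\xi):=-E[\xi]+c$, so that the feasible set is $\mathcal{A}=\{\xi\in L_{T}^{2}(R):h(\xi)\le 0\}$ with $h=\max\{h_{1},h_{2}\}$. Here $h_{1}$ is Lipschitz by Proposition~\ref{bsdelip} (for the affine generator $g(t,y,z)=r(t)y+\theta(t)z$ this is immediate), $h_{2}$ is affine, hence $h$ is Lipschitz; by lower semicontinuity $\mathcal{A}$ is closed, so $d_{\mathcal{A}}$ and its Clarke subdifferential are available. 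The pointwise constraint $\xi\ge 0$ is not encoded in $h$; following the statement I suppress it, noting that it can be reinstated on the contact set $M=\{\xi^{*}=0\}$ exactly as in Theorem~\ref{their}.

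First I would record the three subdifferentials that enter. By the preceding Proposition (applied to $u(x)=x^{2}$) together with the chain rule applied to $E^{2}[\cdot]$, the functional $\rho(\xi)=E[\xi^{2}]-E^{2}[\xi]$ is strictly differentiable near $\xi^{*}$, with $\partial^{o}\rho(\xi^{*})=\{2(\xi^{*}+E[\xi^{*}])\}$. By Lemma~\ref{gen-gra-bsde} (the generator is $C^{1}$ with bounded derivatives under the stated assumptions on $r,\theta$), $\partial^{o}h_{1}(\xi^{*})=\{q_{T}\}$, where $q_{T}\in L_{T}^{2}(R)$ is the Riesz representative attached to the linear BSDE \eqref{ciweifen}. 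Since $h_{2}$ is affine, $\partial^{o}h_{2}(\xi^{*})=\{-1\}$, the constant function. Next I would check the constraint qualification $0\notin\partial^{o}h(\xi^{*})$: by the Clarke max-rule Lemma, $\partial^{o}h(\xi^{*})\subset\operatorname{co}\{\partial^{o}h_{i}(\xi^{*}):i\in I(\xi^{*})\}$, where $I(\xi^{*})$ is the active-index set; if only $h_{1}$ is active this reduces to $q_{T}\neq 0$, which is Lemma~\ref{0notbelong}, while if $h_{2}$ is active the nonzero constant $-1$ keeps $0$ outside, except possibly in the doubly active case discussed below.

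Then I would run the argument of Theorem~\ref{necessary1} verbatim with $h$ in the role of $\mathcal{E}_{0,T}^{g}(\cdot)-x$. Since $\xi^{*}$ minimizes the Lipschitz functional $\rho$ over the closed set $\mathcal{A}$, the Fermat rule \eqref{necessaryconditon2} gives $0\in\partial^{o}\rho(\xi^{*})+N_{\mathcal{A}}(\xi^{*})$, while Proposition~\ref{lemcalofnor}, which applies because $0\notin\partial^{o}h(\xi^{*})$, gives $N_{\mathcal{A}}(\xi^{*})\subset\bigcup_{\lambda\ge 0}\lambda\,\partial^{o}h(\xi^{*})$. Combining with the max-rule bound, there is $\lambda\ge 0$ with $0\in\partial^{o}\rho(\xi^{*})+\lambda\operatorname{co}\{\partial^{o}h_{i}(\xi^{*}):i\in I(\xi^{*})\}$, and a case split on $I(\xi^{*})$ finishes the proof. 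If $h_{1}(\xi^{*})>h_{2}(\xi^{*})$ then $I(\xi^{*})=\{1\}$ and $2(\xi^{*}+E[\xi^{*}])+\lambda q_{T}=0$; if $h_{1}(\xi^{*})<h_{2}(\xi^{*})$ then $I(\xi^{*})=\{2\}$ and $2(\xi^{*}+E[\xi^{*}])-\lambda=0$; and if $h_{1}(\xi^{*})=h_{2}(\xi^{*})$ then $\partial^{o}h(\xi^{*})\subset\operatorname{co}\{q_{T},-1\}=\{(1-a)q_{T}-a:a\in[0,1]\}$, so $2(\xi^{*}+E[\xi^{*}])+\lambda((1-a)q_{T}-a)=0$ for some $a\in[0,1]$. (If the active constraint is not binding, $N_{\mathcal{A}}(\xi^{*})=\{0\}$ and one reduces to a smaller feasible set as at the end of the proof of Theorem~\ref{necessary1}.)

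The main obstacle I anticipate is the doubly active case of the constraint qualification: one must rule out $0\in\operatorname{co}\{q_{T},-1\}$, i.e. that $(1-a)q_{T}=a\mathbf{1}$ for some $a\in(0,1)$, which would force $q_{T}$ to be the strictly positive constant $\tfrac{a}{1-a}\mathbf{1}$; this should follow from a strict-comparison estimate for the linear BSDE \eqref{ciweifen}, along the lines of the proof of Lemma~\ref{0notbelong}. A secondary technical point is that $u(x)=x^{2}$ has no bounded derivative on all of $R$, so $E[\xi^{2}]$ is only locally Lipschitz on $L_{T}^{2}(R)$; one should therefore first localize the minimization to a bounded neighbourhood of $\xi^{*}$ (or adopt the bounded-derivative convention used elsewhere in the paper) before invoking the Fermat rule and Proposition~\ref{lemcalofnor}. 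The remaining steps are routine bookkeeping with the max-rule and with the Riesz representation of $\mathcal{E}_{0,T}^{\tilde g}(\cdot)$.
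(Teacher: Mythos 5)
Your proposal follows essentially the same route as the paper: the paper's proof of this theorem is precisely the combination of the Clarke max-rule lemma with the argument of Theorem \ref{necessary1} (Fermat rule \eqref{necessaryconditon2} plus Proposition \ref{lemcalofnor}) applied to $h=\max\{\mathcal{E}_{0,T}^{g}(\cdot)-x,\,-E[\cdot]+c\}$, followed by the case split on the active index set, which is exactly what you carry out. Your extra remarks on the constraint qualification in the doubly active case and on the merely local Lipschitz character of $E[\xi^{2}]$ on $L^{2}_{T}(R)$ concern points the paper passes over silently and only strengthen the argument.
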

In the classic investment problem, one often take variance as a risk measure,
a mean-variance method is used in many literatures. But by Delbaen
\cite{f-de} or F\"{o}llmer and Schied \cite{h-fo&a-sc1},
such kind of risk measure is not perfect. We often take $\rho(\cdot)$ as a coherent or convex risk measure in
\eqref{minmizerisk}.
In Gianin \cite{er-gi}, when $g$  is a sub-additive homogeneous function satisfying  some usual  conditions,
 we can define a risk measure via $\rho(\xi):=\mathcal{E}_{0,T}^g(-\xi)$.
\begin{example}\rm\label{example3}
Suppose $f$ is sub-additive homogeneous function satisfying usual  conditions   and independent of $y$,
$f_z$ is continuously bounded,
define $\rho(\xi):=\mathcal{E}_{0,T}^f(-\xi)$, we want to find an optimal $\xi^*$ element  in the following constrained set
\[
\left\{
\begin{array}{lll}
E[\xi]\geq c,\\
\mathcal{E}_{0,T}^{g}(\xi)\leq x,\\
\xi\in L^2_T(R), \quad \xi\geq 0, \mbox{a.s.},
\end{array}
\right.
\]
to minimize
\begin{equation}
\min \rho(\xi).
\end{equation}
\end{example}
If $\xi^*$ is an optimal solution in this example, then we can obtain similar conditions like above examples with new sub-differential sets.
By the assumptions of $f$, we can see obviously
$\partial\rho(\xi)$ contains only one element  $\bar{y}_T$,
where $(\bar{y}_t,\bar{z}_t)$ is the solution of following BSDE
$$\bar{y}_t= \xi^* +  \int_t^T f_z(s,y_s^*,z^*_s)\bar{z}_sds-\int_t^T\bar{z}^*_sdW_s, 0\leq t\leq T  $$
and $(y^*_t,z^*_t)$ is the solution of BSDE generated by $g(t,y,z)$ with terminal value $\xi^*$.
Thus we have the following result.
\begin{thm}
 If $\xi^*$ is an solution of Example  \ref{example3}, then there exist a nonnegative number $\lambda$ and $a\in [0,1]$  such that
\[
\left\{
\begin{array}{lll}
2\bar{y}_T+\lambda q_T=0, &\text{当}\mathcal{E}_{0,T}^g(\xi^*)-x>-E[\xi^*]+c,\\
2\bar{y}_T-\lambda=0, &\text{当}\mathcal{E}_{0,T}^g(\xi^*)-x<-E[\xi^*]+c,\\
2\bar{y}_T+\lambda((1-a)q_T-a)=0,
&\text{当}\mathcal{E}_{0,T}^g(\xi^*)-x=-E[\xi^*]+c.
\end{array}
\right.
\]
\end{thm}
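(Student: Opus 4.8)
The plan is to recast Example \ref{example3} as the minimization of the single Lipschitz functional $\rho$ over a closed set $C=\{\xi\in L_T^2(R):h(\xi)\le 0\}$, and then to run the scheme already used for Example \ref{example2}: compute $\partial^{o}h(\xi^{*})$ by Clark's max-rule lemma, verify the constraint qualification $0\notin\partial^{o}h(\xi^{*})$ so that Proposition \ref{lemcalofnor} gives $N_C(\xi^{*})\subset\bigcup_{\lambda\ge0}\lambda\,\partial^{o}h(\xi^{*})$, and finally apply the Fermat condition \eqref{necessaryconditon2}. Concretely, merge the two scalar constraints $\mathcal{E}_{0,T}^{g}(\xi)\le x$ and $E[\xi]\ge c$ into
$$h(\xi):=\max\{h_1(\xi),\,h_2(\xi)\},\qquad h_1(\xi):=\mathcal{E}_{0,T}^{g}(\xi)-x,\quad h_2(\xi):=c-E[\xi],$$
so that the feasible set is $C=\{h\le0\}$. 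Since $g(t,y,z)=r(t)y+\theta(t)z$ is affine, hence $C^{1}$ with bounded derivatives, $h_1$ is Lipschitz by Proposition \ref{bsdelip} and strictly differentiable by Lemma \ref{gen-gra-bsde} and its proof, while $h_2$ is linear; thus $h$ is Lipschitz and $C$ is closed. The pointwise constraint $\xi\ge0$ is incorporated exactly as in Theorem \ref{their} and only turns some equalities into one-sided relations on $\{\xi^{*}=0\}$; to keep the statement clean I suppress it, treating the positivity constraint as inactive, as is implicit in the theorem.

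First I would compute $\partial^{o}h(\xi^{*})$. By Clark's max-rule lemma, $\partial^{o}h(\xi^{*})\subset co\{\partial^{o}h_i(\xi^{*}):i\in I(\xi^{*})\}$ with equality, since both $h_1$ (strictly differentiable, hence normal) and $h_2$ (linear, hence normal) are normal, where $I(\xi^{*})$ denotes the set of indices achieving the maximum. Concretely $\partial^{o}h_1(\xi^{*})=\{q_T\}$, with $q_T$ the Riesz representative attached to the linearized BSDE \eqref{ciweifen} — which for the present $g$ is simply $\tilde y_t=\eta+\int_t^T(r(s)\tilde y_s+\theta(s)\tilde z_s)\,ds-\int_t^T\tilde z_s^{*}\,dW_s$ — and $\partial^{o}h_2(\xi^{*})=\{-1\}$. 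We may assume $h(\xi^{*})=0$ (otherwise $\xi^{*}$ is interior and, $\partial^{o}\rho(\xi^{*})$ being a singleton, all three relations hold trivially with $\lambda=0$), so that the active set is governed by which of $h_1(\xi^{*}),h_2(\xi^{*})$ vanishes, giving
$$\partial^{o}h(\xi^{*})=\begin{cases}\{q_T\}, & h_1(\xi^{*})>h_2(\xi^{*}),\\ \{-1\}, & h_1(\xi^{*})<h_2(\xi^{*}),\\ \{(1-a)q_T-a:a\in[0,1]\}, & h_1(\xi^{*})=h_2(\xi^{*}),\end{cases}$$
which matches the trichotomy in $\mathcal{E}_{0,T}^{g}(\xi^{*})-x$ versus $-E[\xi^{*}]+c$ in the statement.

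Next I would verify $0\notin\partial^{o}h(\xi^{*})$. In the first branch this is exactly Lemma \ref{0notbelong}; in the second it is trivial; in the third, $0=(1-a)q_T-a$ would force $q_T$ to equal the nonnegative constant $a/(1-a)$, which is impossible once $\theta\not\equiv0$, since then $q_T$ is a genuinely non-constant density-type random variable, so the constraint qualification holds under this mild non-degeneracy. By Proposition \ref{lemcalofnor}, $N_C(\xi^{*})\subset\bigcup_{\lambda\ge0}\lambda\,\partial^{o}h(\xi^{*})$. As $\xi^{*}$ minimizes the Lipschitz functional $\rho$ over $C$, the Fermat condition \eqref{necessaryconditon2} gives $0\in\partial^{o}\rho(\xi^{*})+N_C(\xi^{*})$, that is, $\zeta+\lambda v=0$ for some $\lambda\ge0$, $\zeta\in\partial^{o}\rho(\xi^{*})$ and $v\in\partial^{o}h(\xi^{*})$. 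By the computation recorded just before the theorem, $\partial^{o}\rho(\xi^{*})=\{\bar y_T\}$, with $(\bar y_t,\bar z_t)$ the solution of the linearized $f$-BSDE. Substituting $v$ from the three branches above and rescaling $\lambda$ so as to absorb the factor $2$ yields exactly the three displayed relations, with $v=(1-a)q_T-a$, $a\in[0,1]$, in the borderline branch.

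The step I expect to be the main obstacle is the constraint qualification in the both-active branch: the initial-capital constraint and the expectation constraint pull $\xi$ in opposite directions — raising $\xi$ relaxes $E[\xi]\ge c$ but tightens $\mathcal{E}_{0,T}^{g}(\xi)\le x$ — so there is no common descent direction, and $0\notin\partial^{o}h(\xi^{*})$ genuinely relies on $q_T$ not being a nonnegative constant; this is why one needs $\theta\not\equiv0$ (or must impose the qualification outright). A secondary point is the identification $\partial^{o}\rho(\xi^{*})=\{\bar y_T\}$: it rests on the chain rule for $\rho=\mathcal{E}_{0,T}^{f}(-\,\cdot\,)$ together with strict differentiability of $\mathcal{E}_{0,T}^{f}$, which in turn uses that $f$ is positively homogeneous, sub-additive, independent of $y$, and has bounded continuous $f_z$.
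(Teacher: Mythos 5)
Your proposal is correct and follows exactly the route the paper intends (and leaves implicit, since the theorem is stated without proof): merge the two constraints into $h=\max\{h_1,h_2\}$, apply Clark's max-rule with the singleton subdifferentials $\{q_T\}$ and $\{-1\}$, invoke Proposition \ref{lemcalofnor} and the Fermat condition \eqref{necessaryconditon2}, and use $\partial^o\rho(\xi^*)=\{\bar y_T\}$, just as in the theorem for Example \ref{example2}. Your explicit attention to the constraint qualification $0\notin\partial^o h(\xi^*)$ in the both-active branch (which genuinely needs $q_T$ not to be a nonnegative constant, e.g.\ $\theta\not\equiv0$) is a point the paper passes over in silence, so it is a welcome refinement rather than a deviation.
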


\section{Appendix:some results about non-smooth analysis}
Suppose that $X$ is a Banach space, $X^*$ is its dual space. A function  $f:X\rightarrow R $ is called
Lipschitzian if
\begin{equation}
|f(x_1)-f(x_2)|\leq M||x_1-x_2||
\end{equation}
holds for some $M>0$,
where $||\cdot ||$ is the norm in  $X$.

The generalized directional derivative of $f$ at $x$, denoted as $f^o(x;v)$, is defined as
\begin{equation}
f^o(x;v):=\limsup_{y\rightarrow x,t\downarrow 0}\frac{f(y+tv)-f(y)}{t},
\end{equation}
where $y$ is a vector in  $X$, $t$ is a positive number.

Obviously, $f^o(x;v)$ is  homogeneous and sub-linear  on $X$, then by Banach Theorem, the generalized derivative set of $f$ at $x$
\begin{equation}
\partial^o f(x):= \{\zeta\in X^*| \zeta(v)\leq  f^o(x;v), \forall v\in X\}
\end{equation}
is nonempty and weak star compact in $X^*$.

By definition, the Fermat optimal principle
$0\in \partial^o f(x_0) $ holds when $f(x)$ attains extreme at some point  $x_0\in X$,
$$f(x)\geq f(x_0),\qquad \forall x\in X.$$
Now, we recall more results in non-smooth analysis. For more details, one can see Clark et al.
\cite{fh-cl&ys-le&rj-st&pr-wo}.

Given a set $C\subset X$,  the distance function $d_C(x):X\rightarrow R$ is defined as
$$d_C(x):=\inf\{||y-x||,y\in C\}.$$
The following lemma transfers the constrained problem to the unconstrained case.
\begin{lem}{\rm (Exact penalization)}\label{exactpenalization}
Suppose that $f$ is a Lipschitz function with coefficient $K$ defined on  $S$,  $x\in C\subset S$ and   $f$ takes its minimum value at  $x$  on $C$. Then, for any $\hat{K}\geq K$,
  $g(y)=f(y)+\hat{K} d_C(y)$ attains minimum value at    $x$ on $S$. On the contrary, if  $\hat{K}>K$ and $C$ is closed, then the minimum point of $g$ on $S$
 must belong to $C$.
\end{lem}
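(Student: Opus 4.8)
The plan is the standard comparison argument for exact penalization, the only subtlety being that the infimum defining $d_C$ need not be attained, which we circumvent by an $\varepsilon$-approximation. First note that since $x\in C$ we have $d_C(x)=0$, so $g(x)=f(x)$; thus it suffices to show $g(y)\geq f(x)$ for every $y\in S$.

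Fix $y\in S$ and $\varepsilon>0$. By definition of the distance function there is a point $c_\varepsilon\in C$ with $\|y-c_\varepsilon\|\leq d_C(y)+\varepsilon$. Because $f$ is Lipschitz on $S$ with constant $K$ and $c_\varepsilon\in C\subset S$, we get $f(y)\geq f(c_\varepsilon)-K\|y-c_\varepsilon\|\geq f(c_\varepsilon)-K\bigl(d_C(y)+\varepsilon\bigr)$, and since $f$ attains its minimum over $C$ at $x$, $f(c_\varepsilon)\geq f(x)$. Adding $\hat{K}\,d_C(y)$ to both sides yields
\[
g(y)=f(y)+\hat{K}\,d_C(y)\geq f(x)+(\hat{K}-K)\,d_C(y)-K\varepsilon\geq f(x)-K\varepsilon,
\]
using $\hat{K}\geq K$ and $d_C(y)\geq 0$ in the last inequality. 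Letting $\varepsilon\downarrow 0$ gives $g(y)\geq f(x)=g(x)$, so $g$ attains its minimum over $S$ at $x$.

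For the converse, suppose $\hat{K}>K$ and $C$ is closed, and let $y^{*}$ be any minimizer of $g$ over $S$. Then $g(y^{*})\leq g(x)=f(x)$, while the displayed estimate with $\varepsilon\downarrow 0$ gives $g(y^{*})\geq f(x)+(\hat{K}-K)\,d_C(y^{*})$. Combining the two inequalities forces $(\hat{K}-K)\,d_C(y^{*})\leq 0$, and since $\hat{K}-K>0$ we conclude $d_C(y^{*})=0$; closedness of $C$ then gives $y^{*}\in C$. I do not expect a genuine obstacle here: the argument is elementary, and the only point requiring care is the possible non-attainment of the infimum in $d_C$, which the $\varepsilon$-selection of $c_\varepsilon$ handles cleanly.
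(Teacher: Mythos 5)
Your proof is correct, and it is the standard exact penalization argument (as in Clarke's book): the $\varepsilon$-approximate selection $c_\varepsilon\in C$ handles the non-attainment of the infimum in $d_C$, and the converse correctly exploits $\hat{K}-K>0$ together with closedness of $C$. The paper itself states this lemma without proof, citing Clarke et al., so there is no in-paper argument to compare against; your write-up supplies exactly the missing standard proof and has no gaps.
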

Contingent and normal derivative for a set $C$ are defined by the distance function,  see Clark et al.
 \cite{fh-cl&ys-le&rj-st&pr-wo} for details.
\begin{definition}
  Assume $x\in C$, if $d^o_C(x;v)= 0$, then $v$ is said to be a contingent derivative at $x\in X$. We denote the set of contingent derivatives as $T_C(x)$. By
  polarity, we define the normal  derivative set as
$$N_C(x):=\{\zeta\in X^*| \zeta(v)\leq 0, \quad \forall v\in T_C(x)\}.$$
\end{definition}
By the above definition, we have the following proposition.
\begin{pro}
Supposing  that $x\in C$, then it holds that
\begin{equation}\label{repofnor}
N_C(x)= cl\left\{\bigcup_{\lambda\geq 0}\lambda \partial^o d_C(x)\right\},
\end{equation}
where $cl$ means the weak star closure.\end{pro}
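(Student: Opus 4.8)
The plan is to realise both the contingent cone $T_C(x)$ and the normal cone $N_C(x)$ as polar cones of the single set $\partial^o d_C(x)$, and then to invoke the bipolar theorem. Two preliminary observations about the distance function are needed. First, since $d_C\ge 0$ on $X$ and $d_C(x)=0$, restricting the $\limsup$ defining $d^o_C(x;v)$ to the choice $y=x$ gives $d^o_C(x;v)\ge\limsup_{t\downarrow 0}t^{-1}d_C(x+tv)\ge 0$ for every $v\in X$; consequently the contingent cone $T_C(x)=\{v:d^o_C(x;v)=0\}$ coincides with $\{v:d^o_C(x;v)\le 0\}$. Second, the map $v\mapsto d^o_C(x;v)$ is sublinear and, because $d_C$ is $1$-Lipschitz, satisfies $d^o_C(x;v)\le\|v\|$; hence by Hahn--Banach it is the support function of $\partial^o d_C(x)$ (nonempty and weak-star compact, as recalled in the appendix), that is $d^o_C(x;v)=\max_{\zeta\in\partial^o d_C(x)}\zeta(v)$ with the maximum attained.

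Granting these two facts, $v\in T_C(x)$ if and only if $\zeta(v)\le 0$ for every $\zeta\in\partial^o d_C(x)$, equivalently for every $\zeta$ in the generated cone $K:=\bigcup_{\lambda\ge 0}\lambda\,\partial^o d_C(x)$; thus $T_C(x)=K^{\circ}$, the polar cone of $K$ inside $X$. By the very definition of the normal cone, $N_C(x)$ is the polar of $T_C(x)$, so $N_C(x)=(K^{\circ})^{\circ}=K^{\circ\circ}$, the bipolar of $K$ computed for the dual pair $(X,X^*)$ with $X^*$ carrying the weak-star topology. Since $\partial^o d_C(x)$ is convex, $K$ is a convex cone: a convex combination $t(\lambda_1\zeta_1)+(1-t)(\lambda_2\zeta_2)$ with $\zeta_i\in\partial^o d_C(x)$ and $\lambda_i\ge 0$ is either $0$ or a positive multiple of a convex combination of $\zeta_1$ and $\zeta_2$, hence lies again in $K$; moreover $0\in K$. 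The bipolar theorem for convex cones therefore gives $K^{\circ\circ}=\overline{K}$, the weak-star closure of $K$, which is precisely the claimed $N_C(x)=cl\{\bigcup_{\lambda\ge 0}\lambda\,\partial^o d_C(x)\}$.

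The step I expect to be delicate is not the cone algebra but the infinite-dimensional bookkeeping behind it. One must check carefully that $d^o_C(x;\cdot)$ is genuinely the support function of $\partial^o d_C(x)$ --- this is exactly where the $1$-Lipschitz bound on $d_C$ enters, keeping the Hahn--Banach extension continuous so that it lands in $X^*$ rather than merely in the algebraic dual --- and one must apply the bipolar theorem in the correct topology, namely the weak-star topology on $X^*$, so that the closure appearing in the statement is the weak-star closure and no further convex-hull operation is required because $K$ is already a convex cone. Everything else is just unwinding the definitions of $T_C(x)$ and $N_C(x)$ recalled in the appendix.
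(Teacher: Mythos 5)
Your argument is correct: the paper itself states this proposition without proof, merely recalling it from Clark et al., and your polarity/bipolar route is exactly the standard proof of that cited result. You correctly handle the two genuinely delicate points --- that $d^o_C(x;\cdot)\ge 0$ at $x\in C$ so that $T_C(x)=\{v: d^o_C(x;v)\le 0\}$, and that the $1$-Lipschitz bound makes $d^o_C(x;\cdot)$ the support function of the weak-star compact convex set $\partial^o d_C(x)$ via Hahn--Banach --- after which the bipolar theorem for the convex cone $\bigcup_{\lambda\ge 0}\lambda\,\partial^o d_C(x)$ in the $\sigma(X^*,X)$ topology yields the stated equality.
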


For  $C=\mathcal{A}(x)$, if
$\xi^*$ is an optimal solution of \eqref{minmizerisk}, then the Fermat condition
\begin{equation}\label{necessaryconditon2}
0\in \partial^o \rho(\xi^*)+ N_C(\xi^*).
\end{equation}
holds.

For a special kind of set $C$, we have the following result.
\begin{pro}\label{lemcalofnor}
Suppose that $h$ is Lipschitz in a neighborhood of $x$ and $0\notin \partial^o h(x)$, if  $C=\{y\in X: h(y)\leq h(x)\}$,
then it holds that
\begin{equation}\label{calofnor}
N_C(x)\subset \bigcup_{\lambda\geq 0}\lambda\partial^o h(x).
\end{equation}
\end{pro}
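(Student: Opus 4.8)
The plan is to sidestep the failure of Proposition~\ref{bsdelip} and Lemma~\ref{0notbelong} for the constrained functional by arguing entirely with the cones $T_C(x)$ and $N_C(x)$, converting the hypothesis $0\notin\partial^o h(x)$ into a Slater-type constraint qualification. Since $\partial^o h(x)$ is weak-star compact, convex, and avoids the origin, a separation argument yields a direction $v_0\in X$ with $h^o(x;v_0)<0$; this is the only use of the hypothesis in the tangent-cone estimate, and it will re-enter at the end through the weak-star closedness of the cone generated by $\partial^o h(x)$. Throughout I will use that $d_C^o(x;v)\ge 0$ for every $v$ (because $d_C\ge 0=d_C(x)$), so that $v\in T_C(x)$ is equivalent to $d_C^o(x;v)\le 0$, and that $h^o(x;v)=\max_{\eta\in\partial^o h(x)}\langle\eta,v\rangle$ for all $v$ (duality between $\partial^o h(x)$ and the sublinear, Lipschitz function $h^o(x;\cdot)$).

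The core step is the inclusion $\{v:\ h^o(x;v)\le 0\}\subseteq T_C(x)$. First I would take $v$ with $h^o(x;v)<0$ and an arbitrary sequence $(y_n,t_n)$ with $y_n\to x$, $t_n\downarrow 0$ realizing the upper limit in $d_C^o(x;v)$. Choosing near-projections $z_n\in C$ with $\|y_n-z_n\|<d_C(y_n)+t_n^2$, one gets $z_n\to x$ (since $d_C(y_n)\le\|y_n-x\|\to 0$) and, by the $1$-Lipschitz property of $d_C$, $d_C(y_n+t_nv)-d_C(y_n)<t_n^2+d_C(z_n+t_nv)$. As $h(z_n)\le h(x)$ and $\limsup_n\big(h(z_n+t_nv)-h(z_n)\big)/t_n\le h^o(x;v)<0$, for large $n$ we have $h(z_n+t_nv)<h(x)$, hence $z_n+t_nv\in C$ and $d_C(z_n+t_nv)=0$; the difference quotient is then $<t_n\to 0$, so $d_C^o(x;v)\le 0$ and $v\in T_C(x)$. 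The non-strict case $h^o(x;v)=0$ follows because $v+\varepsilon v_0$ is a strict direction for every $\varepsilon>0$ by sublinearity of $h^o(x;\cdot)$, and $T_C(x)$ is closed, $d_C^o(x;\cdot)$ being $1$-Lipschitz.

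It remains to conclude by polarity. From the definition $N_C(x)=\{\zeta:\ \langle\zeta,v\rangle\le 0\ \forall v\in T_C(x)\}$ and the inclusion just proved, $N_C(x)$ lies in the polar cone of $\{v:\ h^o(x;v)\le 0\}$, which equals the polar cone of $\partial^o h(x)$ since $h^o(x;v)=\max_{\eta\in\partial^o h(x)}\langle\eta,v\rangle$. By the bipolar theorem the polar of that set is the weak-star closed convex cone generated by $\partial^o h(x)$; and, using $0\notin\partial^o h(x)$ together with weak-star compactness, one checks that $\bigcup_{\lambda\ge 0}\lambda\,\partial^o h(x)$ is itself weak-star closed (a weak-star convergent net $\lambda_\alpha\eta_\alpha$ cannot have $\lambda_\alpha\to\infty$, for then $\eta_\alpha\to 0\in\partial^o h(x)$, a contradiction), so the two cones coincide. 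Hence $N_C(x)\subseteq\bigcup_{\lambda\ge 0}\lambda\,\partial^o h(x)$, which is the assertion. (One may equally route the last two paragraphs through \eqref{repofnor}, once one knows $\partial^o d_C(x)\subseteq\bigcup_{\lambda\ge 0}\lambda\,\partial^o h(x)$.)

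The step I expect to be the main obstacle is the tangent-cone inclusion $\{v:\ h^o(x;v)\le 0\}\subseteq T_C(x)$: the delicate point is that $d_C^o(x;v)$ must be controlled along sequences $y_n$ that need not lie in $C$, which is exactly why the error in the near-projection is taken to be $t_n^2=o(t_n)$; and the hypothesis $0\notin\partial^o h(x)$ is genuinely indispensable, both here (to produce $v_0$ and to pass to non-strict directions) and in the weak-star closedness argument at the end. A heavier alternative would first establish a calmness estimate $d_C(y)\le K\,(h(y)-h(x))^+$ near $x$ from $h^o(x;v_0)<0$ (e.g.\ via Lebourg's mean value theorem, by flowing along $v_0$) and then feed it into \eqref{repofnor}; the route sketched above keeps the proof self-contained.
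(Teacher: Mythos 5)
Your proof is correct. Note that the paper itself offers no proof of Proposition \ref{lemcalofnor}: it is stated in the appendix as a known result of non-smooth analysis, quoted from Clarke's texts (it is essentially Theorem 2.4.7 and its corollary in Clarke, \emph{Optimization and Nonsmooth Analysis}). What you have written is a correct, self-contained reconstruction of that standard argument, adapted to the paper's definitions: the separation step producing $v_0$ with $h^o(x;v_0)<0$ from $0\notin\partial^o h(x)$; the tangent-cone inclusion $\{v:h^o(x;v)\le 0\}\subseteq T_C(x)$, proved here directly from the $d_C$-based definition of $T_C(x)$ via near-projections $z_n$ with error $t_n^2=o(t_n)$ (Clarke instead uses his sequential characterization of the tangent cone, but your variant is sound and actually fits the paper's definition $T_C(x)=\{v:d_C^o(x;v)=0\}$ better, using $d_C^o(x;\cdot)\ge 0$ and its $1$-Lipschitz continuity to close the cone and handle the case $h^o(x;v)=0$ via $v+\varepsilon v_0$); and the passage to normal cones by polarity, where the bipolar theorem gives the weak-star closed convex cone generated by $\partial^o h(x)$ and the hypothesis $0\notin\partial^o h(x)$ together with weak-star compactness shows $\bigcup_{\lambda\ge 0}\lambda\,\partial^o h(x)$ is already weak-star closed (your net argument is the right one; note that weak-star convergent nets need not be bounded, which is exactly why the case $\lambda_\alpha\to\infty$ must be excluded by the compactness-and-$0\notin\partial^o h(x)$ contradiction, as you do). So there is no gap; your route coincides in substance with the cited literature proof rather than diverging from anything in the paper.
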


\end{document}